\documentclass[11pt]{article}
\usepackage{authblk}
\usepackage[all]{xy}
\usepackage{verbatim}
\usepackage{amssymb,latexsym,amsmath,graphicx,amsfonts,amsthm,eucal,mathrsfs,fullpage, epsfig}
\usepackage{amscd,subfigure,color,xcolor,setspace,cite,mathtools}
\usepackage[font=footnotesize]{caption}
\usepackage{float}
\DeclareMathOperator{\sign}{sign}
\usepackage{caption}
\captionsetup{width=.8\textwidth}
\usepackage{epstopdf}
\epstopdfsetup{update} 
\overfullrule = 0pt
\setlength{\topmargin}{-0.4in}
\setlength{\textwidth}{6.5in}
\setlength{\textheight}{9.5in}
\setlength{\oddsidemargin}{-0.1in}
\setlength{\evensidemargin}{-0.1in}

\usepackage[english]{babel}
\usepackage{babel}
\usepackage{fontenc}
\usepackage[colorlinks=true,citecolor=blue]{hyperref}
\usepackage{float}
\usepackage{lineno}
\usepackage{pgf,tikz}
\usepackage{array}
\usepackage{multirow}
\usepackage{float}
\restylefloat{table}

\usetikzlibrary{arrows}
\usetikzlibrary[patterns]
\usetikzlibrary{decorations.pathmorphing}
 \usetikzlibrary{plotmarks}
        \usetikzlibrary{intersections}
        \usetikzlibrary{shadings}
        \usetikzlibrary{calc}
\newlength{\hatchspread}
\newlength{\hatchthickness}
\newlength{\hatchshift}
\newcommand{\hatchcolor}{}
\tikzset{hatchspread/.code={\setlength{\hatchspread}{#1}},
         hatchthickness/.code={\setlength{\hatchthickness}{#1}},
         hatchshift/.code={\setlength{\hatchshift}{#1}},
         hatchcolor/.code={\renewcommand{\hatchcolor}{#1}}}
\tikzset{hatchspread=3pt,
         hatchthickness=0.4pt,
         hatchshift=1pt,
         hatchcolor=black}
\pgfdeclarepatternformonly[\hatchspread,\hatchthickness,\hatchshift,\hatchcolor]
   {custom north west lines}
   {\pgfqpoint{\dimexpr-2\hatchthickness}{\dimexpr-2\hatchthickness}}
   {\pgfqpoint{\dimexpr\hatchspread+2\hatchthickness}{\dimexpr\hatchspread+2\hatchthickness}}
   {\pgfqpoint{\dimexpr\hatchspread}{\dimexpr\hatchspread}}
   {
    \pgfsetlinewidth{\hatchthickness}
    \pgfpathmoveto{\pgfqpoint{0pt}{\dimexpr\hatchspread+\hatchshift}}
    \pgfpathlineto{\pgfqpoint{\dimexpr\hatchspread+0.15pt+\hatchshift}{-0.15pt}}
    \ifdim \hatchshift > 0pt
      \pgfpathmoveto{\pgfqpoint{0pt}{\hatchshift}}
      \pgfpathlineto{\pgfqpoint{\dimexpr0.15pt+\hatchshift}{-0.15pt}}
    \fi
    \pgfsetstrokecolor{\hatchcolor}
    \pgfusepath{stroke}
   }
\usetikzlibrary{arrows}
\usetikzlibrary[patterns]
\numberwithin{equation}{section}
\newtheorem{theorem}{Theorem}[section]
\newtheorem{definition}[theorem]{Definition}
\newtheorem{lemma}[theorem]{Lemma}
\newtheorem{example}[theorem]{Example}
\newtheorem{proposition}[theorem]{Proposition}
\newtheorem{corollary}[theorem]{Corollary}
\newtheorem{remark}{Remark}[section]

\def\bbr{{\mathbb R}}

\def\bbc{{\mathbb C}}
\def\bbd{{\mathbb D}}

\def\la#1{\hbox to #1pc{\leftarrowfill}}
\def\ra#1{\hbox to #1pc{\rightarrowfill}}
\def\fract#1#2{\raise2pt\hbox{$ #1 \atop #2 $}}
\def\ds{\displaystyle}
\def\x{{\bf x}}
\def\ds{\displaystyle}

\AtBeginDocument{\addtocontents{toc}{\protect\setlength{\parskip}{0pt}}}
\setcounter{tocdepth}{1}
\begin{document}\title{\textbf{Constrained polynomial roots and a modulated approach to  Schur stability}}
\author[1, 2]{ Ziyad AlSharawi\thanks{Corresponding author: zsharawi@aus.edu. This work was done while the first author was on sabbatical leave from the American University of Sharjah.}}
\author[2]{ Jose S. C\'anovas}
\author[1]{ Sadok Kallel}

\affil[1]{\small American University of Sharjah, P. O. Box 26666, University City, Sharjah, UAE}
\affil[2]{\small Universidad Politécnica de Cartagena, Paseo de Alfonso XIII 30203, Cartagena, Murcia, Spain}
\date{\today}
\maketitle

\begin{abstract}
It is common in stability analysis to linearize a system and investigate the spectrum of the Jacobian matrix. This approach faces the challenge of determining the matrix spectrum when the coefficients depend on parameters or when the characteristic polynomial is more than quartic. In this paper, we reverse the classical process and use the authors'  work on global stability to find sufficient conditions on the coefficients that ensure the zeros of the characteristic polynomial are in the open unit disk. This leads to an algorithm that begins by testing the $\ell_1$-norm of the polynomial, and if it is not less than two, perform an iteration process that can be implemented with moderate effort. We give examples that show the effectiveness of our method when compared with the Jury's algorithm. Last, we formalize our constructions in terms of semialgebraic sets.

\end{abstract}
\noindent {\bf AMS Subject Classification}: 30C15, 39A06, 39A30, 15A18. \\
\noindent {\bf Keywords}: Schur stability; eigenvalues; zeros of polynomials; embedding; local stability.

\section{ Introduction}
A square matrix is called Schur-stable if its spectrum is contained within the open unit disk $\mathbb{D}$. Since a characteristic polynomial determines the spectrum of a square matrix, we say a polynomial is Schur-stable if all of its zeros are located inside  $\mathbb{D}$. Spectra of square matrices and roots of polynomials have significant implications across various disciplines such as science, engineering, and economics \cite{Ho-Jo2013,El1999,Mu1989,Br-Ch2012,Og2015,Mi2020,Ga2009}, and the substantial plethora of research and pertinent literature concerning this topic is becoming challenging to trace.  In continuous dynamical systems, it is essential to determine whether the spectrum of a matrix lies within the left-half plane. On the other hand, discrete dynamical systems are mainly concerned with determining whether the spectrum falls within  $\mathbb{D}$. By applying a M\"{o}bius transform to polynomials, it can be shown that both are two sides of the same coin and, as a result, addressing a query in one area has a parallel analog in the other. Therefore, this paper will concentrate on determining whether the zeros of a real polynomial lie within  $\mathbb{D}$. The search for an effective method or algorithm for checking the Schur stability of a real-valued polynomial has reached a satisfactory milestone. However, an algorithm's ease of implementation and simplicity are the significant factors that spur more creative ideas in this direction, and we aim to provide a convenient approach that contributes toward this goal.
\\

Consider the $n^{th}$ degree monic-polynomial
  \begin{equation}\label{polynomial}
  p(z)=z^n+a_{n-1}z^{n-1}+a_{n-2}z^{n-2}+\cdots+a_1z+a_0,
  \end{equation}
where the coefficients are real numbers. The main problem of ``root clustering'' is to find conditions on the coefficients so that the roots lie in a predefined domain of $\bbc$. This is a vast area of research that cannot be summarized here. We can, however, formulate this problem qualitatively and in complete generality, as we do in \S\ref{algebraictheory},  and show that if the roots are to lie in a semialgebraic subset $B$ of the plane $\bbr^2$, then the coefficient locus, written $\mathcal C_n(B)$, must be a semialgebraic subset of $\bbr^n$. This is a framework that goes back to the work of Kalman (see \cite{lasserre}).
\\

In this paper, we are concerned with the classical Schur problem of finding conditions on the roots of the real polynomial $p$ in Eq. \eqref{polynomial} so that its roots are contained strictly in the unit disk. We offer a new method, fast and simple, to decide about this problem. 
The starting point is the well-known result that $p$ is Schur-stable if its $\ell_1$-norm $\|p\|_1=1+\sum_{j=1}^{n-1}|a_j|$ is less than $2$ (\cite{smithies}, Theorem I). This is derived by use of 
Rouch\'e's theorem in complex analysis (see Proposition 1.3.1 in \cite{ladas}) or by our own analysis-free proof given in Proposition \ref{Pr-MainTheorem} here.  We call this the ``$\ell_1$-condition''. We then develop an algorithm that stems from global stability in maps of mixed monotonicity. This starts with the $\ell_1$ condition on the coefficients of a polynomial and then iterates a linear difference equation to produce a sequence of conditions, each of which is sufficient to ensure that the roots of the polynomial are within the open unit disk. If the $\ell_1$-condition fails for an iterated system, one tries the next, and so forth. The method is inconclusive if one cannot establish the $\ell_1$-condition for any finite number of iterates. The advantage of this approach is the ease of its implementation, as illustrated through some examples that show when the method works, it works at a minimal cost and effort, as opposed to Jury's classical algorithm \cite{Ju1963}, which is generally cumbersome when parameters are involved or when the polynomial degree is high \cite{Ga-Sc-Su-Tr-We2021}. \\

The main idea is to tackle the Schur stability problem from a global stability perspective in discrete-time dynamical systems. Based on the authors' recent work on global stability \cite{Al-Ca-Ka2023}, we connect the polynomial $p$ in Eq. \eqref{polynomial} to the $n^{th}$-order linear difference equation
\begin{equation}\label{Eq-nthDegreeDE}
\begin{split}
  x_{k+1}=&F(x_k,x_{k-1},\ldots,x_{k-n+1})\\
  =&-a_{n-1}x_k-a_{n-2}x_{k-1}-\cdots-a_0x_{k-n+1}.
  \end{split}
  \end{equation}
The delay of the linear difference equation is the degree of the polynomial, and the eigenvalues of the Jacobian matrix of the system are the zeros of the polynomial. The basic theory of linear dynamical systems asserts that $0$ is a global attractor for Eq. \eqref{Eq-nthDegreeDE}
if and only if the roots of $p$ in Eq. \eqref{polynomial} are in the unit disk. The Schur problem becomes, therefore, equivalent to giving conditions on the global stability of the zero equilibrium of Eq. \eqref{Eq-nthDegreeDE}. Since local and global stability are equivalent for linear systems, it turns out that our techniques in \cite{Al-Ca-Ka2023} produce the needed constraint on the coefficients to ensure global stability. If this constraint is not applicable, we substitute a linear difference equation in one of the variable entries to obtain a new system with greater delay (which we call an iterate system) and, thus, a new polynomial. Then, we apply our technique again to this new system. The global stability of any iterate system implies global stability for the original system, so if stability occurs at any finite stage, the algorithm stops, and we answer the original question positively.
In so doing, we obtain a sequence of \textit{sufficient} conditions starting with the $\ell_1$ condition. \\

The algorithm introduced in this paper is ``incremental'' as opposed to being an ``all or nothing'' algorithm, as in the case of Jury's algorithm. It is also ``systematic'' instead of ``ad-hoc'' as with many techniques used to address the examples discussed in this paper. Geometrically, as we point out, the algorithm is producing a \textit{semialgebraic filtration}, which is a nested increasing sequence of semialgebraic sets of increasing degrees, that is progressively filling out the coefficient locus $\mathcal C_n(\mathbb{D})$ in $\mathbb R^n$ of all real polynomials whose roots are in the unit disk. If the filtration fills the whole locus, then we obtain \textit{necessary} conditions as well, but this is unresolved in this paper and remains under investigation.  
\\

The organization of this paper is as follows: In Section 2, we streamline the general relationship between roots and coefficients loci by using the language of semialgebraic sets. Section three provides a concise overview of the embedding technique and the global stability result from \cite{Al-Ca-Ka2023}, which is essential for establishing our algorithm. Section four focuses on an illustrative and simplified scenario involving a second-degree polynomial. We illustrate the practical application of the theory in this specific example and, subsequently, extend the algorithm to polynomials of a higher degree. In section five, we present and prove our algorithm stating sufficient conditions based on the polynomial coefficients. Section six provides concrete examples and practical applications that illustrate the advantages and limitations of our technique.  In section seven, we restate our algorithm geometrically as an increasing semialgebraic filtration of increasing degree.  Finally, we close with a summary of the main results of this paper.


\section{The algebraic theory}\label{algebraictheory}

Determining conditions on the coefficients of a real polynomial so that the roots are in predefined subsets of $\mathbb C$ is part of the general theory of \textit{root-clustering}. In this section, we state and prove a general result in that regard. Although our Schur stability algorithm in \S\ref{algorithm} does not depend on the results of this section, the results here give a well-rounded context. \\

We first recall that a \textit{multivariate polynomial} in $n$ variables  $x_1,\ldots , x_n$ is a finite sum of monomials of the form $x_{i_1}^{j_1}\cdots x_{i_r}^{j_r}$, $1\leq i_j\leq n$, $j_t\in\mathbb N$, with coefficients in $\bbr$. The collection of all such polynomials forms a ring commonly written
$\bbr [x_1,\ldots, x_n]$. A semialgebraic subset of $\bbr^n$ is any subset defined in terms of polynomial equalities and inequalities. More precisely, the semialgebraic sets form the smallest Boolean collection generated by subsets of the form $\{\x = (x_1,\ldots, x_n)\in \bbr^n\ |\  P(\x ) > 0\}$, where $P(\x ) := P(x_1, ..., x_n)$ is a multivariate polynomial. In other words, this is the collection obtained by finite intersections, finite unions, and complements of these subsets given by polynomial inequalities\cite{Co2002}. This is also equivalent to the following formulation: a subset $X\subset\bbr^n$ is semialgebraic if it is the union of finitely many subsets (the ``basic semialgebraic sets'') of the form
\begin{equation}\label{semialgebraicset}
\{\x \in\bbr^n\ |\ P(\x ) = 0, g_1(\x )>0, \ldots, g_k(\x )>0\},
\end{equation}
where $P,g_1,\ldots, g_k\in \bbr[x_1,\ldots, x_n]$. The mathematical area of ``real algebraic geometry'' is precisely the study of semialgebraic sets, their properties, and applications. 

Before we give the main result of this section, we say by convention that $B\subset \bbc^n$ is semialgebraic if $B$ is semialgebraic in $\bbr^{2n}$, after identifying $\bbc^n$ with $\bbr^{2n}$ in the standard way. Now, we give the theorem, which is a conceptual refinement of \cite{lasserre}.

\begin{theorem}\label{semialgebraic}
Let $B$ be a semialgebraic set of $\bbc$ and consider the set $\mathcal C_n(B)\subset\bbr^n$ of all coefficients
$(a_0,\ldots, a_{n-1})\in\bbr^n$ of real monic polynomials $p(z) = z^n + a_{n-1}z^{n-1}+\cdots +a_1z+a_0$ of degree $n\geq 1$ whose roots belong to $B$.
Then $\mathcal C_n(B)$ is a semialgebraic set of $\bbr^n$.
\end{theorem}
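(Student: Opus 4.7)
The plan is to realize $\mathcal{C}_n(B)$ as the image of a semialgebraic set under a linear projection, and then invoke the Tarski--Seidenberg theorem, which asserts that the image of a semialgebraic subset of $\bbr^N$ under projection to $\bbr^n$ is again semialgebraic. This is the natural engine here because the condition ``all roots of $p$ lie in $B$'' is an existential statement about the roots, and Tarski--Seidenberg is precisely the tool that eliminates the resulting quantifier.

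To set up the projection, I parametrize prospective roots by their real and imaginary parts, writing $z_j=x_j+iy_j$ with $(x_j,y_j)\in\bbr^2$. Expanding $\prod_{j=1}^{n}(z-z_j)$ and matching with $z^n+a_{n-1}z^{n-1}+\cdots+a_0$ yields Vieta's relations $a_{n-k}=(-1)^{k}e_k(z_1,\ldots,z_n)$, where $e_k$ is the $k$-th elementary symmetric polynomial. Separating real and imaginary parts produces $2n$ polynomial identities in the $3n$ real variables $a_0,\ldots,a_{n-1},x_1,y_1,\ldots,x_n,y_n$: the real parts of the elementary symmetric functions must match the corresponding $a_k$, and their imaginary parts must vanish. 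I then assemble
\begin{equation*}
S=\{(a,x,y)\in\bbr^{3n}\,:\,(x_j,y_j)\in B\text{ for each } j=1,\ldots,n,\text{ and the Vieta identities hold}\}.
\end{equation*}
Since $B\subset\bbc\cong\bbr^2$ is semialgebraic by hypothesis, each constraint $(x_j,y_j)\in B$ is semialgebraic, and the Vieta identities are polynomial equations; hence $S$ is a semialgebraic subset of $\bbr^{3n}$.

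Let $\pi:\bbr^{3n}\to\bbr^n$ project onto the $a$-coordinates. By Tarski--Seidenberg, $\pi(S)$ is semialgebraic, and the remaining task is to verify $\pi(S)=\mathcal{C}_n(B)$. The containment $\pi(S)\subseteq\mathcal{C}_n(B)$ is transparent: a point of $S$ exhibits an explicit factorization of the associated $p$ into linear factors whose roots lie in $B$. For the reverse, the fundamental theorem of algebra guarantees that any $(a_0,\ldots,a_{n-1})\in\mathcal{C}_n(B)$ yields $n$ complex roots (with multiplicity) all lying in $B$; their real and imaginary parts then provide a preimage in $S$. This equality is the only step requiring care, and it is essentially bookkeeping. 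The substantive content of the theorem is therefore the invocation of Tarski--Seidenberg, which turns the existential statement ``there exist roots $z_1,\ldots,z_n\in B$'' into a quantifier-free semialgebraic description of the coefficient locus.
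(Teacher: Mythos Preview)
Your proof is correct and follows essentially the same strategy as the paper: both realize $\mathcal C_n(B)$ as the image of a semialgebraic set (built from $B^n$ together with the Vieta/elementary-symmetric-function relations) and then invoke Tarski--Seidenberg, the paper using its corollary that polynomial images of semialgebraic sets are semialgebraic. The only cosmetic difference is that the paper encodes the real-coefficient condition via a separate lemma decomposing the root locus into strata of real roots and conjugate pairs, whereas you encode it more directly by requiring the imaginary parts of the elementary symmetric functions to vanish; your formulation is slightly leaner but the substance is the same.
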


The set $\mathcal C_n(B)$ is referred to as the \textit{coefficient locus}.
Before we give the proof of this result, we illustrate it with some classical examples.

\begin{example}\rm (Hurwitz problem). Define
$g: \bbc\longrightarrow\bbr, z\mapsto Re(z)$.
The set of polynomials represented by Eq. \eqref{polynomial} whose roots $z$ are in the half-plane $\{z\ |\ g(z) < 0\}$ has been characterized entirely by Hurwitz (these are called Hurwitz polynomials \cite{garzaetal}). In addition, if we want the polynomials $p(z)$ to have real coefficients, then we need the roots to satisfy some further semialgebraic conditions, which we spell out in the proof of the theorem.
\end{example}

\begin{example}\label{schurproblem}\rm  (Schur problem).  Define
$g: \bbc\longrightarrow \bbr, z\mapsto z\bar z$.
The set of real polynomials represented by Eq. \eqref{polynomial} whose roots $z$ are in the unit disk $B=\{z\ |\ g(z) < 1\}$  has been completely characterized by Schur-Cohn and Jury (these are the \textit{Schur polynomials}). Notice that
$B= \mathbb{D}=\{(x,y)\ |\ x^2+y^2<1\}$ is indeed a semialgebraic set of $\bbr^2$. When $n=2$, one has the following description for the image set in the coefficient domain (\cite{Ju1963}, page 146):
\begin{equation}\label{c2B}
\mathcal C_2 (\mathbb{D})= \{(a_0,a_1)\in\bbr^2\ :\ a_0<1,\ a_0+a_1>-1\ ,\ a_0-a_1>-1\}.
\end{equation}
This description of the coefficient locus as a semialgebraic set is not unique, and various authors have equivalent descriptions (Schur-Cohn, Hermite, Samuelson, Farebrother). The case when $n=3$ is discussed in detail in \cite{koji} where it is verified that the Schur stability conditions are equivalent to the Samuelson necessary and sufficient conditions, as simplified by Farebrother below:
$$\mathcal C_3 (\mathbb{D}) = \{(a_0,a_1,a_2)\in\bbr^3\ :\ -1<a_0<1,\ a_0^2-1<a_0a_2-a_1,\ |a_0+a_2|<1+a_1\}.$$
As asserted, $\mathcal C_2 (\mathbb{D})$ and $\mathcal C_3 (\mathbb{D}) $ are semialgebraic sets in $\bbr^2$ and $\bbr^3$, respectively. Notice that the algebraic conditions for $\mathcal C_2(B)$ are linear, while they are both quadratic and linear in the case of $\mathcal C_3(B)$. This is not coincidental, as discussed in \S\ref{stratification}.
\end{example}

Before we embark on the proof of Theorem \ref{semialgebraic}, we first recall the elementary symmetric polynomials on $n$ entries
$$e_k(x_1,\ldots, x_n) = (-1)^k\sum_{1\leq j_1<\cdots <j_k\leq n}x_{j_1}\cdots x_{j_k}\ ,\ 1\leq k\leq n.$$
In particular, $e_1(x_1,\ldots, x_n)=-\sum x_i$ and $e_n(x_1,\ldots, x_n)=(-1)^nx_1x_2\cdots x_n$. These polynomials are symmetric because they satisfy
$e_k(x_{\sigma (1)},\ldots, x_{\sigma (n)}) = e_k(x_1,\ldots, x_n)$ for any permutation $\sigma\in\mathfrak S_n$, where $\mathfrak S_n$ is the symmetric group on $n$-letters.
They are called \textit{elementary} because they generate all symmetric polynomials in $\bbr [x_1,\ldots, x_n]$ (these also form a polynomial ring). Their relevance lies in the fact that the coefficients of a monic polynomial are  precisely the symmetric polynomials evaluated on roots.

Let $\hbox{SP}^{n}(\bbc)$ be the set of \textit{unordered} $n$-tuples of points in $\bbc$, which is obtained as the quotient of $\bbc^n$ by the permutation action of $\mathfrak S_n$. The quotient map is written $\pi : \bbc^n\rightarrow\hbox{SP}^n(\bbc)$. An element in $\hbox{SP}^{n}(\bbc)$ is written as $[z_1,\ldots, z_n]$, $z_i\in\bbc$, with the possibility of repetitions among entries. By construction, $\pi (x_1,\ldots, x_n) = [x_1,\ldots, x_n]$.

A well-known useful result in the field states that the map
   \begin{eqnarray}
   \Psi: \hbox{SP}^{n}(\bbc )&\longrightarrow& \bbc^n\label{mappsi}\\
   \ [z_1,\ldots, z_n]&\longmapsto&(e_1(z_1,\ldots,z_n),\ldots, e_n(z_1,\ldots, z_n))\nonumber
   \end{eqnarray}
is a bijection (in fact, a diffeomorphism). It gives a bijection between the set of roots (the left-hand side) and the set of coefficients given by the elementary polynomials evaluated on the roots (the right-hand side). We call the left-hand side the ``root domain'' and its image on the right the ``coefficient domain''. If the root domain changes, its coefficient domain will change via a bijection. We stress that roots are naturally unordered while coefficients are ordered.

Now, consider the composite
$$
f: \bbc^n\fract{\pi}{\ra 2}\hbox{SP}^n(\bbc)\fract{\Psi}{\ra 2}\bbc^n,
$$
and assume the polynomial 
 $p(z) = z^n+a_{n-1}z^{n-1}+ \cdots + a_1z+a_0$ has roots in $B$, where $B$ is semialgebraic in $\bbc$. The roots are unordered, and there are $n$ of them, so they lie in the set $\pi (B^n)=\hbox{SP}^{n}(B)\subset\hbox{SP}^{n}(\bbc)$.
Moreover, we require that our polynomials have real coefficients, which puts an extra condition on the roots as either real or conjugate pairs.

\begin{lemma}
    The set $\mathcal R_n(B)$ of all tuples $(z_1,\ldots, z_n)\in B^n$ such that $(z-z_1)\cdots (z-z_n)\in \bbr [x_1,\ldots, x_n]$  is a semialgebraic subset of $\bbc^n$.
\end{lemma}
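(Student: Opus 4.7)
The plan is to identify $\bbc^n$ with $\bbr^{2n}$ in the usual way, writing $z_j = x_j + iy_j$, and show that $\mathcal R_n(B)$ is cut out of $B^n$ by $n$ polynomial equations in $\bbr[x_1,y_1,\ldots,x_n,y_n]$.

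First I would observe that $(z-z_1)\cdots(z-z_n)$ has real coefficients if and only if each of the elementary symmetric polynomials $e_k(z_1,\ldots,z_n)$ is real, since, up to sign, the $e_k$ are precisely the coefficients of the polynomial. Expanding $e_k(z_1,\ldots,z_n)$ after substituting $z_j = x_j + iy_j$, its imaginary part $\mathrm{Im}\,e_k$ is a polynomial in the $2n$ real variables $x_1,y_1,\ldots,x_n,y_n$ with rational coefficients. Hence the subset
\[
\mathcal{A}_n := \{(z_1,\ldots,z_n)\in\bbc^n\ |\ \mathrm{Im}\,e_k(z_1,\ldots,z_n)=0,\ k=1,\ldots,n\}
\]
is an algebraic (and in particular semialgebraic) subset of $\bbr^{2n}$.

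Next, by the convention stated just before the lemma, $B\subset\bbc$ being semialgebraic means $B$ is semialgebraic in $\bbr^2$ under the identification $\bbc\cong\bbr^2$. Products of semialgebraic sets are semialgebraic (each factor is a finite Boolean combination of polynomial inequalities, and taking products amounts to adjoining further variables and intersecting the resulting cylinders), so $B^n\subset \bbr^{2n}$ is semialgebraic. Finally, $\mathcal R_n(B) = \mathcal A_n \cap B^n$, and the intersection of two semialgebraic sets is semialgebraic by the definition recalled at the start of the section.

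There is really no substantial obstacle here: the only conceptual point worth emphasizing is the translation of the \emph{reality of all coefficients} into $n$ explicit polynomial equations in the real and imaginary parts of the roots, together with the routine fact that the semialgebraic category is closed under finite products and intersections. (If one wanted, one could also present $\mathcal A_n$ more structurally as the fixed locus of the involution $(z_1,\ldots,z_n)\mapsto(\bar z_{\sigma(1)},\ldots,\bar z_{\sigma(n)})$ ranging over $\sigma\in\mathfrak S_n$, but the coefficient-based description already makes the semialgebraic structure manifest.)
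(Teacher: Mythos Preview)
Your argument is correct, but it differs from the paper's. You cut out the ``real-coefficients'' locus directly by the $n$ polynomial equations $\mathrm{Im}\,e_k(z_1,\ldots,z_n)=0$, obtaining a single algebraic set $\mathcal A_n$ and then intersecting with $B^n$. The paper instead describes the same locus (there called $\mathcal Z_n$) structurally: a polynomial has real coefficients iff its multiset of roots is closed under conjugation, so one stratifies $\bbc^n$ by ``tuple partitions'' $P$ recording which entries are real and which pairs are conjugate, writes each stratum $\mathcal Z_n^P$ as an algebraic set, and takes the finite union. Your route is shorter and makes algebraicity immediate without any combinatorics. The paper's decomposition, on the other hand, keeps track of the root type $(m,\ell)$ (number of real roots, number of conjugate pairs), which is precisely what is exploited in the subsequent Example~\ref{Ex-Sadok} to split $\mathcal C_2(\bbd)$ into the pieces $C_{2,0}$ and $C_{0,1}$. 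So the two arguments establish the same intersection $B^n\cap\mathcal A_n = B^n\cap\mathcal Z_n$, but the paper's version carries extra bookkeeping that feeds into the later computations.
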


\begin{proof} It is convenient to think of an element $(z_1,\ldots, z_n)$ of $\mathcal R_n(B)$ as ordered roots of a polynomial with real coefficients (first condition), such that each root is in $B\subset\bbc$ (second condition).
Consider the following subset $\mathcal Z_n^P$ of $\bbc^n$, which is defined for a given ``tuple partition" $P=((i_1,\ldots, i_m), (r_1,\ldots, r_\ell), (s_1,\ldots, s_\ell)),$ by the following:
$$\mathcal Z_n^P = \left\{(z_1,\ldots, z_n)\in\bbc^{n} \ ,\ Im(z_{i_1})=\cdots = Im(z_{i_m})=0,\ z_{r_t} = \bar z_{s_t},\ 1\leq t\leq \ell\right\},$$
\begin{equation}\label{mlstratum}
\ \ \hbox{$P$ as above with}\ \{i_1,\ldots, i_m, r_1,\ldots, r_\ell, s_1,\ldots, s_\ell\} = \{1,\ldots, n\},\ m+2\ell =n.
\end{equation}
This is the set of ordered roots, $m$ of which are real and $\ell$ of which have imaginary parts, $m+2\ell =n$. This set can be rewritten as
$$
    \mathcal Z_n^P = \left\{(z_1,\ldots, z_n) \ ,\ \ \ z_{i_j}-\bar{z}_{i_j}=0, 1\leq j\leq m\right\}\ \bigcap\
\left\{(z_1,\ldots, z_n)\ |\ \ z_{r_t} = \bar{z}_{s_t},\ 1\leq t\leq \ell\right\}
$$
so it is, in fact, algebraic.
Define $\mathcal Z_n = \bigcup_P\mathcal Z_n^P$ (union over all such tuple partitions).
By construction, $\mathcal Z_n$ is semialgebraic as a finite union of semialgebraic sets. By construction, we have the identifications
\begin{eqnarray*}
    \mathcal R_n(B)&=&\{(z_1,\ldots, z_n)\in\bbc^n\ |\
    z_i\in B,\ e_k(z_1,\ldots, z_n)\in\bbr, \forall \ 1\leq k\leq n\}\\
    &=&B^n\cap\mathcal Z_n\subset\bbc^n.
\end{eqnarray*}
Since products of semialgebraic sets are semialgebraic, $B^n$ is semialgebraic in $\bbc^n$ (i.e., in $\bbr^{2n}$ as already explained), and its intersection with other semi-algebraic sets is still semialgebraic, so $\mathcal R_n(B)$ is semialgebraic.    
\end{proof}
Now, we are in a position to give the proof of Theorem \ref{semialgebraic}.
\begin{proof} (of Theorem \ref{semialgebraic}) Observe that the root locus in $\hbox{SP}^n(\bbc)$ of all real polynomials with roots in $B$, is by construction the image under $\pi$ of $\mathcal R_n(B)$, and the coefficient locus is the image under $\Psi\circ\pi$ of $\mathcal R_n(B)$. Note that this image, which we denoted by $\mathcal C_n(B)$, lies in $\bbr^n$, where $\bbr^n\subset\bbc^n$ corresponds to the subspace of real parts. The claim of the Theorem is that this image is semi-algebraic.  We can represent all maps within a convenient commuting diagram
$$\xymatrix{
\mathbb{C}^n\supset R_n(B)\ar[d]^\pi\ar[rr]^-{\phi}&&
\mathcal C_n(B)\subset \mathbb{R}^n\ar[d]^\subset\\
\hbox{SP}^n(C)\ar[rr]^{\Psi}&&\mathbb{C}^n}
$$
Since the map $\phi = \Psi\circ\pi$ is obtained from the elementary symmetric functions, it is a polynomial map; in other words, this is a map when written from $\bbr^{2n}=\bbc^n$ into $\bbr^n$ has the form
$$\phi (x_1,\ldots, x_{2n}) = (p_1(x_1,\ldots, x_{2n}),\ldots, p_n(x_1,\ldots, x_{2n}))\ \ \ ,\ \ \ p_i\in\bbr[x_1,\ldots, x_{2n}]$$
(The case $n=2$ is treated in detail in Example \ref{Ex-Sadok}).
We can then appeal to a theorem from the theory of semialgebraic sets, which states that the image by a polynomial map $\bbr^n\rightarrow\bbr^m$  of a semialgebraic set in $\bbr^n$ is also semialgebraic in $\bbr^m$ (\cite{coste}, Corollary 2.4). Apply this theorem to $\phi$, which is the restriction of $\Psi\circ\pi$ to the root locus $\mathcal R_n(B)$ in $\bbc^n$, so that its image
$\mathcal C_n(B)$ is semialgebraic as desired.
\end{proof}
We close this section with an example that illustrates the above theory and analyzes in detail Example \ref{schurproblem}, corresponding to when roots are inside the unit disk $\bbd$.

\begin{example}\label{Ex-Sadok}\rm
 We consider the case of degree two polynomials
$p(z) = z^2+a_1z+a_0$ and explain how Theorem \ref{semialgebraic} works. Here
$$\mathcal R_2(\bbd ) = \{(z_1,z_2)\ :\ |z_1|<1, |z_2|<1, z_1+z_2\in\bbr, z_1z_2\in\bbr\} $$
and the map $\Psi\circ\pi (z_1,z_2) = (-z_1-z_2,z_1z_2)$.
We can rewrite this root locus as follows:
\begin{eqnarray*}
    \mathcal R_2(\bbd ) &=&
\{(z_1,z_2)\ :\ |z_1|<1, |z_2|<1\}\bigcap
\left(
\{(z_1,z_2)\ :\ z_1,z_2\in\bbr\}\cup
\{(z,\bar z),\; z\in \mathbb{C}\}
\right).
\end{eqnarray*}
To describe its image locus $\mathcal C_2(\bbd )$ in $\bbr^2$, write $z_1=\alpha_1 + \beta_1i$, and $z_2=\alpha_2+\beta_2i$, and identify $(z_1,z_2)$ with $(\alpha_1,\beta_1,\alpha_2,\beta_2)$ in $\bbr^4$. Then
$$\Psi\circ\pi (\alpha_1,\beta_1,\alpha_2,\beta_2)
= ( -\alpha_1-\alpha_2, -\beta_1-\beta_2, \alpha_1\alpha_2 - \beta_1\beta_2, \alpha_1\beta_2+\beta_1\alpha_2),
$$
and its restriction to $\mathcal R_2(\bbd )$ (we labeled $\phi$) has the form
\begin{equation}\label{Eq-casen=2}
\phi (\alpha_1,\beta_1,\alpha_2,\beta_2)
= (-\alpha_1-\alpha_2, \alpha_1\alpha_2 - \beta_1\beta_2)
\end{equation}
since in $\mathcal R_2(\bbd )$, $(\alpha_1,\beta_1)=(\alpha_2,-\beta_2)$ or $\beta_1=\beta_2=0$. Therefore $\Psi\circ\pi (\mathcal R_2(\bbd )) = \mathcal C_2(\bbd )$  is the union of two subsets $\mathcal C_2(\bbd ) = C_{0,1}\cup C_{2,0}$ given by
$$C_{0,1}=\{(a_0,a_1),\
a_0 = -2\alpha\ ,\ 
a_1 = \alpha^2 + \beta^2,\
\alpha^2+\beta^2<1
,\ \hbox{for some}\ \alpha,\beta \in\bbr, \beta\neq 0\}\ ,\ \hbox{and}$$
$$C_{2,0}=\{(a_0,a_1),\ a_0 = -(\alpha_1+\alpha_2),\;
a_1 = \alpha_1\alpha_2,\
\alpha_1^2<1,\ \alpha_2^2<1
,\ \hbox{for some}\ \alpha_1,\alpha_2\in\bbr\}.
$$
The notation $C_{0,1}$ refers to the subset of coefficients corresponding to one complex root (and its conjugate), while the set $C_{2,0}$ corresponds to two real roots. We can make both $C_{0,1}$ and $C_{2,0}$ more explicit. In fact, they simplify to the following form:
$$C_{0,1}=\left\{(a_0,a_1),\
{a_0^2\over 4}< a_1 <1\right\}\ \ \hbox{and}\ \ C_{2,0}=\left\{(a_0,a_1),\ |a_0|-1\leq a_1\leq {a_0^2\over 4} \right\},
$$
and their union is precisely Eq. \eqref{c2B} of Example \ref{schurproblem}.
This can now be drawn in the coefficient plane $(a_0,a_1)$. The desired region is given in Fig. (\ref{Fig-TD}),
with $C_{0,1}$ being the dotted top part (\textit{without} the boundary), and $C_{2,0}$ being the solidly shaded bottom part, together with the boundary. Both subsets are disjoint, and their union is $\mathcal C_2(\bbd )$, which is the entire triangular region.
\end{example}
\section{The embedding theory}\label{Sec-Embedding}

In this section, we review the basic theory required to achieve the goals of this paper. The authors established this section's findings in \cite{Al-Ca-Ka2023}, and since these results are very recent, we extract the main results and refine them to fit our purpose. 

Let $F: \mathbb{R}^k\rightarrow \mathbb{R}$ denote a function that exhibits monotonicity in each argument in the conventional sense. Our concept of ``increasing" and ``decreasing" are meant to be non-decreasing and non-increasing, respectively.  Equip $\mathbb{R}^k$ with a partial ordering ``$\leq_\tau$" that is compatible with the monotonicity, meaning
$$X\leq_\tau Y\ \Longrightarrow\ F(X)\leq F(Y).$$
More precisely, we have the following definition: 

\begin{definition}\label{ordering}
Associate to the map of mixed monotonicity $F$ the 
map $\tau:\;\{1,2,\ldots,k\}\to \{-1,1\}$ which is $\tau (i)=1$ if $ F$ is increasing in its $ith$ argument, and $\tau (i)=-1$ if 
$F$ is decreasing in that argument. Define then the
partial ordering $\leq_\tau$ of $\bbr^k$  by
$$(x_1,x_2,\ldots,x_k)\leq_{\tau}(y_1,y_2,\ldots,y_k)\ \Longleftrightarrow\ x_i\leq y_i\ \hbox{if}\ \tau (i)=1,\ \hbox{and}\ x_i\geq y_i\ \hbox{if}\ \tau (i)=-1.$$
If a function
$F: \mathbb{R}^k\rightarrow \mathbb{R}$ is $\tau$-monotonic, we write $F(\uparrow_\tau)$.
\end{definition}

Let $\uparrow$ denote non-decreasing and $\downarrow$ denote non-increasing. Observe that if a function is of the form $F(\uparrow,\downarrow),$ then $\leq_\tau$ is the so-called ``south-east ($se$)" order, while if $F(\uparrow,\uparrow),$ then $\leq_\tau$ is the so-called ``north-east" order. $\tau$-Monotonic functions $F$ appear frequently in discrete dynamical systems of the form
\begin{equation}\label{Eq-General-DE}
x_{n+1}=F(x_n,x_{n-1},\ldots,x_{n-k+1}), \quad \text{where}\quad n\in \mathbb{N}=\mathbb{Z}^+\cup\{0\},
\end{equation}
 with initial conditions in $\mathbb{R}^k$ or $\mathbb{R}_+^k.$ A simple example, which will occupy us in most of this paper, is the \textit{linear} kth-order difference equation
\begin{equation}\label{Eq-General-LDE}
x_{n+1}= -a_{k-1}x_{n}-a_{k-2}x_{n-1}-\cdots-a_0x_{n-k+1}.
\end{equation}

When $\mathbb{R}^k$ is equipped with a $\tau$ order, any rectangular region in  $\mathbb{R}^k$ has a minimal and a maximal element. In particular, if $\Omega=[a,b]^k,$ then one of the $2^k$ vertices forms the minimal element, while another vertex forms the maximal element. To be more specific,  the minimal element is $m_\tau:=(u_1,u_2,\ldots,u_k),$ where $u_j=a$ if $F$ is increasing in the $jth$ argument, and $u_j=b$ otherwise. The maximal element $M_\tau$ is the ``dual" of $m_\tau$ obtained by switching the $a$'s and $b$'s.

Next, for given $x,y$, $x\leq y$, and $\tau$ as in Definition \ref{ordering}, we define  the point
\begin{equation}\label{pointer}
P_\tau(x,y) = (b_1,\ldots, b_k),\;\text{where}\; b_i = \begin{cases} x,&\ \hbox{if}\ \tau (i)=1\\
y,&\ \hbox{if}\ \tau (i)=-1.
\end{cases}\
\end{equation}
It will be convenient throughout to write $P_\tau=P_\tau(x,y)$, with $x,y$ understood.
The dual $P^t_\tau$ of $P_\tau$ is defined by replacing the $x$'s with $y$'s. If  the system
\begin{equation}\label{Eq-Pseudo}
(F(P_\tau), F(P^t_\tau)) =(x,y)
\end{equation}
 has a {\it unique} solution, it must be of the form $x=y=\bar x,$ which is a fixed point of $F.$  If $x\neq y$ is a solution of Eq. \eqref{Eq-Pseudo}, then $(x,y)$ and $(y,x)$ are dubbed \textit{pseudo-fixed points} of $F.$ We stress that the structure of $P_\tau$ changes based on whether $\tau(1)=1$ or $-1.$

\begin{example}\rm To illustrate the concept of pseudo-fixed points, consider
$$x_{n+1}=F_1(x_n,x_{n-1})=sx_n+(s-1)x_{n-1},\quad 0<s<1.$$
In this example, we have $F_1(\uparrow,\downarrow)$, i.e., $\tau(1)=1$ and $\tau(2)=-1.$ So $P_\tau = (x,y)$. Solving for  $$(x,y) = (F_1(P_\tau), F_1(P^t_\tau)) =(sx+(s-1)y,sy+(s-1)x)=(x,y),$$
we obtain the solution $\bar x=0$, which is the unique fixed point, together with the solution set of all points along the curve $y=-x$. Therefore, all points $(x,-x)$ with $x\neq 0$ are pseudo-fixed points.
\end{example}

The ensuing pivotal outcome is a consequence of the main results of \cite{Al-Ca-Ka2023}.

\begin{theorem}\label{Th-GlobalStability} \cite{Al-Ca-Ka2023} Consider Eq. \eqref{Eq-General-DE} in which $F$ is monotonic in each one of its arguments. Assume that $F$ has a unique fixed point and no pseudo-fixed points. If for each initial condition $X_0:=(x_0,x_{-1},\ldots,x_{-k+1})$ in the domain of $F,$ there exists a point $P_\tau=P_\tau(x,y)$ such that $x\leq y$, $P_\tau\leq_\tau X_0 \leq_\tau P_\tau^t,$  and the two inequalities 
\begin{equation}\label{theoremi}
x<F(P_\tau)\quad\text{and}\quad F(P^t_\tau)<y
\end{equation}
hold,
then the fixed point of $F$ is a global attractor.
\end{theorem}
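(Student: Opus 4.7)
The plan is a two-stage argument: first, trap the orbit in the $\tau$-rectangle determined by $(x,y)$, and then iteratively contract that rectangle down to the fixed point. For the \emph{trapping} stage, I would show by induction on $n$ that $X_n\in[x,y]^k$ for all $n\ge 0$. The base case is exactly the hypothesis $P_\tau\le_\tau X_0\le_\tau P_\tau^t$, which says every coordinate of $X_0$ lies in $[x,y]$. Assuming this holds at step $n$, we have $P_\tau\le_\tau X_n\le_\tau P_\tau^t$, and $\tau$-monotonicity combined with Eq.~\eqref{theoremi} yields
$$x \;<\; F(P_\tau)\;\le\; F(X_n)\;=\;x_{n+1}\;\le\; F(P_\tau^t)\;<\; y,$$
so $x_{n+1}\in(x,y)$ and therefore $X_{n+1}\in[x,y]^k$.

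For the \emph{squeezing} stage, define sequences $\ell_0=x$, $r_0=y$, and recursively
$$\ell_{m+1}=F(P_\tau(\ell_m,r_m)),\qquad r_{m+1}=F(P_\tau^t(\ell_m,r_m)).$$
From Eq.~\eqref{theoremi} one has $\ell_0<\ell_1$ and $r_1<r_0$, while $P_\tau(\ell_m,r_m)\le_\tau P_\tau^t(\ell_m,r_m)$ gives $\ell_{m+1}\le r_{m+1}$. Inductively, shrinking the standard interval $[\ell_m,r_m]$ moves the pointers $P_\tau$ and $P_\tau^t$ in opposite directions with respect to the $\tau$-order (each coordinate equals $\ell_m$ or $r_m$ according to the sign of $\tau(i)$), and $\tau$-monotonicity of $F$ propagates the shrinking: $\ell_m$ is non-decreasing and $r_m$ is non-increasing. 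Hence $\ell_m\nearrow L^*$ and $r_m\searrow U^*$ with $L^*\le U^*$. Continuity of $F$ (implicit here and automatic for the linear system in Eq.~\eqref{Eq-General-LDE}) gives in the limit
$$L^*=F(P_\tau(L^*,U^*)),\qquad U^*=F(P_\tau^t(L^*,U^*)),$$
which is exactly the pseudo-fixed point system \eqref{Eq-Pseudo}; by hypothesis, its only solution is $L^*=U^*=\bar x$.

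To conclude $x_n\to\bar x$, a secondary induction on $m$ shows $x_n\in[\ell_m,r_m]$ eventually: if $x_n\in[\ell_m,r_m]$ for all $n\ge N_m$, then the full state $X_n$ lies in $[\ell_m,r_m]^k$ once $n\ge N_m+k-1$, so by $\tau$-monotonicity $x_{n+1}=F(X_n)\in[\ell_{m+1},r_{m+1}]$, and we can set $N_{m+1}=N_m+k$. Combined with $\ell_m,r_m\to\bar x$, this forces $x_n\to\bar x$, proving global attractivity.

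The main obstacle I anticipate is the order-theoretic bookkeeping in the squeezing stage: one must verify coordinate-by-coordinate, using the asymmetric interaction between the usual order on $[\ell_m,r_m]$ and the mixed signs $\tau(i)=\pm 1$, that the pointers $P_\tau(\ell_m,r_m)$ and $P_\tau^t(\ell_m,r_m)$ do move in the $\tau$-order in the directions that allow $\tau$-monotonicity of $F$ to transmit the contraction from one step to the next. Once this combinatorial verification is in place, the rest is a standard monotone-convergence-plus-continuity argument, sealed by the no-pseudo-fixed-point hypothesis.
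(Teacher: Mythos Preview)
Your argument is correct. The trapping step, the monotone-convergence of $(\ell_m,r_m)$, and the use of the no-pseudo-fixed-point hypothesis to identify the limits all go through exactly as you describe; the order-theoretic bookkeeping you flag as the main obstacle is in fact routine once written out coordinate-by-coordinate.

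The paper does not give a self-contained proof but instead sketches the approach from \cite{Al-Ca-Ka2023}: one embeds Eq.~\eqref{Eq-General-DE} into a $2k$-dimensional system $G:\bbr^{k}\times\bbr^{k}\to\bbr^{k}\times\bbr^{k}$ that is monotone for the south-east order, and then deduces convergence of orbits of $F$ from convergence of orbits of $G$. Your scalar iteration $(\ell_m,r_m)\mapsto\bigl(F(P_\tau(\ell_m,r_m)),F(P_\tau^t(\ell_m,r_m))\bigr)$ is precisely the restriction of that embedded system $G$ to the ``constant'' slice $\{(P_\tau(\ell,r),P_\tau^t(\ell,r)):\ell\le r\}\subset\bbr^k\times\bbr^k$, so the two arguments are the same idea at different levels of abstraction. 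What the full embedding buys is a uniform framework that applies to more general monotone systems and yields additional structural information about the extended dynamics; what your direct scalar squeezing buys is a shorter, more transparent proof tailored to the present statement, with no need to set up the auxiliary $2k$-dimensional map.
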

We stress that the inequalities in \eqref{theoremi} are strict. 
The proof of Theorem \ref{Th-GlobalStability}, which can be found in (\cite{Al-Ca-Ka2023}, Corollary 2.12), consists in extending the System \eqref{Eq-General-DE} from $\bbr^k$ to a new system $G:\;\bbr^{k}\times\bbr^{k}\to \bbr^{k}\times\bbr^{k}$, with the property that $G$ is monotonic with respect to the southeast ordering ($({\bf x}_1, {\bf y}_1)\leq_{se} ({\bf x}_2, {\bf y}_2)$ if ${\bf x}_1\leq_\tau {\bf x}_2$ and ${\bf y}_2\leq_\tau {\bf y}_1$). The convergence of orbits of $G$ implies, under suitable conditions, the convergence of orbits of $F$. We refer the interested reader to \cite{Al-Ca-Ka2023} for more details. We call the set of points $(x,y)$ that satisfy $x\leq y$ and the inequalities $x<F(P_\tau),\;F(P^t_\tau)<y$ a {\it feasible} region. Observe that applying Theorem \eqref{theoremi} is a type of squeezing strategy in a boxed region.\\

Linear systems in Eq. \eqref{Eq-General-LDE}  fit well under the scope of Theorem \ref{Th-GlobalStability}. The origin is always the unique fixed.  In the next two sections, we apply Theorem \ref{Th-GlobalStability} to build a computational theory giving a sequence of sufficient conditions for Schur's problem, i.e., giving conditions on the coefficients to ensure that the roots are in $\mathbb{D}$. 

\section{Degree two polynomials}\label{motivational}

We start with the case of degree two polynomials to clarify our rationale and set the stage for the general method spelled out in \S\ref{algorithm}.

Start with the polynomial
\begin{equation}\label{Eq-PolynomialDegree2}
    p(x)=x^2-\alpha x+\beta.
\end{equation} 
The roots of $p$ are located within  $\mathbb{D}$ if and only if $(\alpha,\beta)$ are located within the triangular shaded region in Fig. \ref{Fig-TD}. This is an immediate consequence of the Jury's test for this polynomial or the computation of Example \ref{Ex-Sadok}. We will exhaust this region in stages using the dynamical system approach.

\definecolor{qqqqff}{rgb}{0.,0.,1.}
\definecolor{ffqqqq}{rgb}{1.,0.,0.}
\definecolor{cqcqcq}{rgb}{0.7529411764705882,0.7529411764705882,0.7529411764705882}
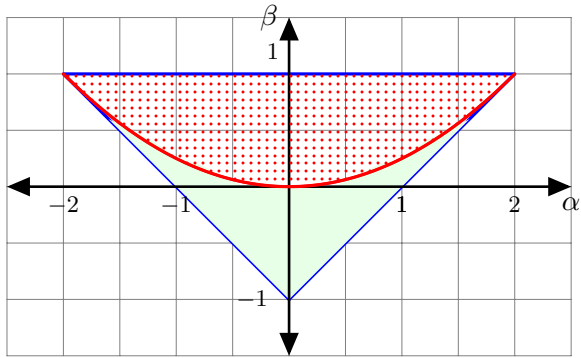
\begin{figure}[htpb]
\begin{center}
\begin{tikzpicture}[line cap=round,line join=round,>=triangle 45,x=1.0cm,y=1.0cm,scale=1.5]
\draw[help lines,step=.5] (-2.5,-1.5) grid (2.5,1.5);
\draw[line width=1.2pt,domain=0:10.0,smooth,variable=\x,color=blue] (0,-1)--(2,1)--(-2,1)--(0,-1);
\fill[fill=green!10] (-2,1)--(-1.8,0.8)--(-1.6,0.6)--(-1.4,0.5)--(-1.2,0.4)--(-1.0,0.25)--(-0.8,0.2)--(-0.6,0.1)--(-0.4,0.0)--(-0.2,0.0)--(0.0,0.0)--(0.2,0.0)--(0.4,0.0)--(0.6,0.1)--(0.8,0.2)--(1.0,0.25)--(1.2,0.4)--(1.4,0.5)--(1.6,0.6)--(1.8,0.8)--(2.0,1.0)  -- (0,-1) -- (-2,1)-- cycle;
\draw[-triangle 45, line width=1.0pt,scale=1] (0,0) -- (2.5,0) node[below] {$\alpha$};
\draw[line width=1.0pt,-triangle 45] (0,0) -- (-2.5,0);
\draw[-triangle 45, line width=1.0pt,scale=1] (0,0) -- (0,1.5) node[left] {$\beta$};
\draw[line width=1.0pt,-triangle 45] (0,0) -- (0.0,-1.5);
\draw [samples=50,domain=-2.0:2.0,xshift=0.0cm,yshift=0.0cm,line width=1.0pt,color=ffqqqq,fill=ffqqqq,pattern=dots,pattern color=ffqqqq] plot ({\x},{0.25*\x*\x});
\draw[line width=1.2pt,domain=-2:2,smooth,variable=\x,color=red] plot ({\x},{0.25*\x*\x)});
\draw[scale=1] (1,0) node[below] {\footnotesize $1 $};
\draw[scale=1] (0,1.2) node[left] {\footnotesize $1 $};
\draw[scale=1] (1,0.0) node[below] {\footnotesize $1 $};
\draw[scale=1] (-1,0) node[below] {\footnotesize $-1 $};
\draw[scale=1] (-0.1,-1) node[left] {\footnotesize $-1 $};
\draw[scale=1] (2,0) node[below] {\footnotesize $2 $};
\draw[scale=1] (-2,0) node[below] {\footnotesize $-2 $};
\end{tikzpicture}
\end{center}
\caption{The region with solid shading indicates where the eigenvalues are real and within $\mathbb{D}$. In contrast, the shaded region with dots indicates the region where the eigenvalues are non-real and within $\mathbb{D}$.}\label{Fig-TD}
\end{figure}

\subsection{The linear  approximation} 
As explained in the introduction, we associate to $p$ the discrete linear system
\begin{equation}\label{Eq-SecondOrder1}
x_{n+1}=F(x_n,x_{n-1})=\alpha x_n-\beta x_{n-1},\; x_0,x_{-1}\in \mathbb{R},\; n=0,1,\ldots
\end{equation}
The first observation to make is that the eigenvalues $\lambda_1$ and $\lambda_2$ of the Jacobian matrix for this system are precisely the roots of $p$ in Eq. \eqref{Eq-PolynomialDegree2}. In this case, we have
$$\alpha = \lambda_1 + \lambda_2\ \ ,\ \ 
\beta = \lambda_1\lambda_2.$$

We study the global stability of Eq. \eqref{Eq-SecondOrder1}, which is equivalent to having $\lambda_1,\lambda_2\in\mathbb D$.
Observe that $F$ is monotonic, and its monotonicity in each argument depends on the signs of $\alpha$ and $\beta$.  This allows us to consider the zero boundary cases of $\alpha$ and $\beta$ as positive or negative.  Therefore, there are four cases to consider:

\begin{description}
\item{\textbf{Case i:}} When $(\alpha,\beta)=(+,+),$ we have $F(\uparrow,\downarrow),$ $\tau(1)=1, \tau(2)=-1$ and $P_\tau = (x,y)$ as introduced in \eqref{pointer}.  By Theorem \ref{Th-GlobalStability}, to obtain global convergence, we must solve for $(x,y)$ in the inequalities \eqref{theoremi}
$$ x< F(x,y)=\alpha x-\beta y\quad \text{and}\quad y>F(y,x)=\alpha y-\beta x,$$
or equivalently,
$\ds y< \frac{(\alpha -1)}{\beta} x$ and $\ds x> \frac{(\alpha -1)}{\beta} y$, given $\alpha,\beta\in \bbr_+^2$. 
Recall the signs of $\alpha$ and $\beta$ together with the assumption that $x\leq y$. The \textit{solution set} of $(x,y)$ is non-empty if and only if
$$\alpha <1 \quad \text{and}\quad (\alpha-1)^2> \beta^2.$$
Since $\alpha,\beta\geq 0$, this is equivalent to the \textit{feasible region} given by 
\begin{equation}\label{In-Degree2-Cond1}
\alpha\geq 0,\;\beta\geq 0\quad \text{and}\quad \alpha+\beta <1.
\end{equation}
Under these conditions, the solution set for $P_\tau$ consists of all the points $(x,y)$ in the shaded region caught between the two lines sketched in Fig. \ref{feasible}.
We also have to explain why for all $X_0\in\bbr^2$, there exists a $P_\tau$ such that $P_\tau\leq_\tau X_0\leq_\tau P_\tau^t$ (this is the initial condition in Theorem \ref{Th-GlobalStability}, with $\tau$ the south-east ordering, i.e., $\tau(1)=1$ and $\tau(2)=-1$). Fig. \ref{feasible} shows why the initial condition holds: it is because the solution set is unbounded in the northwest directions as depicted. To clarify the choice of $P_\tau$ for an initial condition $X_0=(x_0,y_0),$ consider 
$$P_\tau=\left(\min\{x_0,y_0\}-\epsilon,\max\{x_0,y_0\}+\epsilon\right),$$
then $P_\tau\leq_\tau X_0 \leq_\tau P_\tau^t.$ In other words, $P_\tau$, and $P_\tau^t$, are the upper left and lower right (respectively) corners of a square containing $X_0.$ Fig. \ref{feasible} gives a visual illustration. Note that the lines representing the boundary of the feasible region in the figure are symmetric with respect to the $y=-x$ axis.
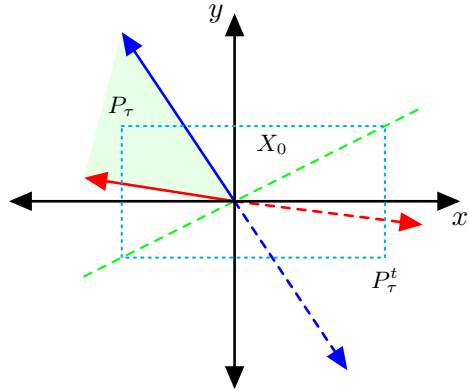
\begin{figure}[htb]
   \begin{center}
\begin{tikzpicture}[line cap=round,line join=round,>=triangle 45,x=1.0cm,y=1.0cm,yscale=0.5,scale=0.5]
\draw[line width=0.8pt,color=green,dashed] (-4,-4) -- (5.0,5.0);
\fill[fill=green!10] (0,0)--(-3.0,9)--(-4,1.25)--(0,0)-- cycle;
\draw[->,line width=1.0pt,color=blue] (0,0)--(-3,9);
\draw[->,line width=1.0pt,dashed,color=blue] (0,0)--(3,-9);
\draw[->,line width=1.0pt,color=red] (0,0)--(-4,1.25);
\draw[->,line width=1.0pt,color=red,dashed] (0,0)--(5,-1.25);
\draw[-triangle 45, line width=1.0pt,scale=1] (0,0) -- (6,0) node[below] {$x$};
\draw[line width=1.0pt,-triangle 45] (0,0) -- (-6,0);
\draw[-triangle 45, line width=1.0pt,scale=1] (0,0) -- (0,10) node[left] {$y$};
\draw[line width=1.0pt,-triangle 45] (0,0) -- (0.0,-10);
\draw[line width=0.8pt,color=cyan,dotted] (-3,4)--(4,4)--(4,-3)--(-3,-3)--(-3,4);
\draw[scale=1] (-3,4) node[above] {\footnotesize $P_\tau $};
\draw[scale=1] (4,-3) node[below] {\footnotesize $P_\tau^t $};
\draw[scale=1] (1,2.0) node[above] {\footnotesize $X_0 $};
\end{tikzpicture}
\end{center}
    \caption{The region with solid boundary is the solution set of Inequality \eqref{theoremi} for given $\alpha,\beta$ in the coefficient locus. For any initial condition $X_0,$ $P_\tau$ can be found so that $P_\tau\leq_\tau X_0\leq_\tau P_\tau^t$ (for the south-east ordering $\tau(1)=1,\tau(2)=-1$), because the ray on the left (resp. its reflection across the $x=y$ axis), is unbounded in the north-west (resp. south-east) directions. Observe that $P_\tau$ and $P_\tau^t$ are two opposite corners of a box containing $X_0$. Indeed, $P_\tau$  and $P_\tau$ form the minimal and maximal elements of the boxed region with respect to the $\leq_\tau$ partial order.}\label{feasible}
\end{figure}

In summary, for this special case, global convergence under condition \eqref{In-Degree2-Cond1} occurs (by Theorem \ref{Th-GlobalStability}), and it is to the unique fixed point $(0,0)$ of $F$ which is the only equilibrium solution of the linear system. Note that when it comes to pseudo-fixed points, or solutions to the equation $(x,y)= (F(x,y),F(y,x))$ with $x\neq y$, we obtain infinitely many solutions when $|\alpha-1|=|\beta|.$

\item{\textbf{Case ii:}} When $(\alpha,\beta)=(-,+),$ we have $F(\downarrow,\downarrow)$
and $P_\tau = (y,y)$. Here we resolve \eqref{theoremi} to obtain
$$ y> F(x,x)=\alpha x-\beta x\quad \text{and}\quad x< F(y,y)=\alpha y-\beta y,$$
or equivalently,
$y> (\alpha -\beta) x\quad \text{and}\quad x< (\alpha -\beta) y.$
Since $\alpha<0$ and $\beta>0,$ we have $\alpha-\beta$ is negative. We obtain a feasible solution from the assumption $x\leq y$ and the two obtained inequalities when $(\alpha-\beta)^2<1.$ Putting all the conditions on $\alpha$ and $\beta$ together, we arrive at
\begin{equation}\label{In-Degree2-Cond2}
\alpha\leq 0,\;\beta\geq 0\quad \text{and}\quad \beta-\alpha <1.
\end{equation}
We obtain
infinitely many pseudo-fixed points when $\alpha +1=\beta$.

\item{\textbf{Case iii:}} When $(\alpha,\beta)=(+,-),$ we have $F(\uparrow,\uparrow)$ and $P_\tau = (x,x)$. As in the previous two cases, we find infinitely many pseudo-fixed points when $\alpha-1=\beta.$ On the other hand, the conditions on $\alpha$ and $\beta$ that guarantee a feasible region for the system of inequalities are
$$x\leq y,\quad x< (\alpha-\beta)x\quad \text{and}\quad (\alpha-\beta)y< y.$$
This leads us to the following conditions on $\alpha$ and $\beta:$
\begin{equation}\label{In-Degree2-Cond3}
\alpha\geq 0,\;\beta\leq 0\quad \text{and}\quad \alpha-\beta <1.
\end{equation}
\item{\textbf{Case iv:}} Finally,  when $(\alpha,\beta)=(-,-),$ we have $F(\downarrow,\uparrow)$ and $P_\tau = (y,x)$. Solving \eqref{Eq-Pseudo} gives infinitely many pseudo-fixed points when $|\alpha|=|\beta+1|.$ Similar to the previous cases, the inequalities in \eqref{theoremi} yield the following conditions on $\alpha$ and $\beta$ (the feasible region)
\begin{equation}\label{In-Degree2-Cond4}
\alpha\leq 0,\;\beta\leq 0\quad \text{and}\quad \beta+\alpha >-1.
\end{equation}
\end{description}
 The results from the four cases mentioned above lead us to the $\ell_1$-condition.

\begin{corollary}\label{casen=2}
A stability region for the discrete system $x_{n+1}=\alpha x_n-\beta x_{n-1}$ in the parameter space is given by the inequality $|\alpha| + |\beta|<1$.
\end{corollary}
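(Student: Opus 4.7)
The plan is to assemble the four cases already established in the preceding subsection into a single unified inequality, and to invoke Theorem \ref{Th-GlobalStability} to conclude global attraction in each sign regime. Together, cases i through iv exhaust the four sign patterns of $(\alpha,\beta)\in\mathbb{R}^2$, so the stability region is the union of the four feasible regions \eqref{In-Degree2-Cond1}--\eqref{In-Degree2-Cond4}.

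First, I would rewrite each of those four feasibility inequalities in terms of absolute values. In the first quadrant the condition $\alpha+\beta<1$ already reads $|\alpha|+|\beta|<1$ since $\alpha,\beta\geq 0$. In the second quadrant $(\alpha\leq 0,\,\beta\geq 0)$, the condition $\beta-\alpha<1$ becomes $|\beta|+|\alpha|<1$. Exactly the same simplification works in the third and fourth quadrants, using the sign restrictions there. Thus in every quadrant the derived condition is precisely $|\alpha|+|\beta|<1$, and their union is the open diamond $\{(\alpha,\beta):|\alpha|+|\beta|<1\}$.

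Second, I would observe that within this diamond the hypotheses of Theorem \ref{Th-GlobalStability} have already been verified in the corresponding case: the origin is the unique fixed point of $F(x,y)=\alpha x-\beta y$ (linear systems with no unit eigenvalue have $0$ as their only equilibrium), the pseudo-fixed-point condition $|\alpha-1|=|\beta|$ (and its three sign analogs) is excluded by the strict inequality, and the feasible region for $P_\tau=P_\tau(x,y)$ is non-empty and unbounded in the appropriate $\tau$-directions, as illustrated in Fig.~\ref{feasible}. This unboundedness guarantees that for any initial condition $X_0\in\mathbb{R}^2$ one can choose $P_\tau$ with $P_\tau\leq_\tau X_0\leq_\tau P_\tau^t$, which is the missing hypothesis of Theorem \ref{Th-GlobalStability}.

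There is essentially no obstacle here; the only point requiring care is the consistency on the axes $\alpha=0$ and $\beta=0$, where two adjacent quadrants meet. This is handled automatically because each of the conditions \eqref{In-Degree2-Cond1}--\eqref{In-Degree2-Cond4} permits the corresponding coordinate to vanish (weak sign inequalities), while the joining inequality $|\alpha|+|\beta|<1$ is strict, so the two quadrant descriptions agree on their common boundary. The corollary follows immediately.
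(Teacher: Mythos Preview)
Your proposal is correct and follows essentially the same approach as the paper: the paper's proof is the single line ``All previous cases (i)--(iv) are special cases of this inequality,'' and you have simply made explicit how each of the four sign-quadrant conditions \eqref{In-Degree2-Cond1}--\eqref{In-Degree2-Cond4} collapses to $|\alpha|+|\beta|<1$. Your additional remarks on fixed points, pseudo-fixed points, and the boundary axes are accurate elaborations but not required, since those verifications were already carried out case by case in the text preceding the corollary.
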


\begin{proof}
All previous cases (i)-(iv) are special cases of this inequality.
\end{proof}

The region obtained by Corollary \ref{casen=2} is depicted in Fig. \ref{Fig-TD2}, and Table \ref{Tab-TD1} breaks it down more precisely based on the signs of coefficients.  This region is naturally strictly included in
the region of necessary and sufficient conditions of Fig. \ref{Fig-TD}.

\begin{remark}\rm 
The pseudo-fixed points we encounter render our squeezing strategy invalid under Theorem \ref{Th-GlobalStability}, so we do not include the boundary of the region obtained by corollary \ref{casen=2}.
\end{remark}

\begin{table}[ht]
 \footnotesize
\caption{It is important to note that in cases (i) and (ii), we stated that the conditions are both necessary and sufficient. However, we determined the necessity based on our prior knowledge, which will be confirmed in section five. }
 \centering
 \begin{tabular}{l c c c c c c c l} 
 \hline\hline
 &&&&&&&&\\   [-1.0ex]
 Case & &Signs & & Monotonicity & &Conditions& &Remarks \\ [1.0ex]
 \hline \hline
 &&&&&&&&\\   [-1.0ex]
 (i) && $\alpha>0,\;\beta>0$ && $F(\uparrow,\downarrow)$ && $\beta+\alpha<1$ && Sufficient \\ [1ex]
 (ii) & &$\alpha<0,\;\beta>0$ && $F(\downarrow,\downarrow)$  && $\beta-\alpha<1$&& Sufficient  \\ [1ex]
 (iii) && $\alpha>0,\;\beta<0$ && $F(\uparrow,\uparrow)$   && $\alpha-\beta<1$ && Necessary and sufficient \\ [1ex]
  (iv) && $\alpha<0,\;\beta<0$ && $F(\downarrow,\uparrow)$  && $\beta+\alpha>-1$ && Necessary and sufficient \\ [1ex]
  \hline\hline
  \end{tabular} \label{Tab-TD1}
  \end{table}

\definecolor{qqqqff}{rgb}{0.,0.,1.}
\definecolor{ffqqqq}{rgb}{1.,0.,0.}
\definecolor{cqcqcq}{rgb}{0.7529411764705882,0.7529411764705882,0.7529411764705882}
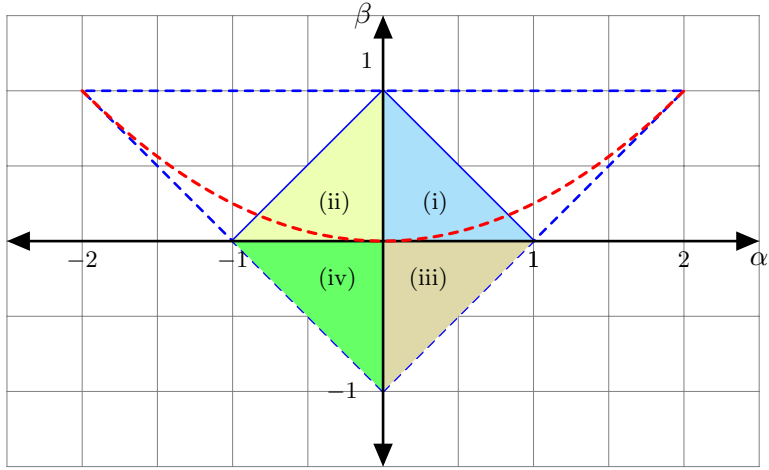
\begin{figure}[htpb]
\begin{center}
\begin{tikzpicture}[line cap=round,line join=round,>=triangle 45,x=1.0cm,y=1.0cm,scale=2.0]
\draw[help lines,step=.5] (-2.5,-1.5) grid (2.5,1.5);
\draw[line width=1.2pt,color=blue] (1,0)--(0,1)--(-1,0);
\draw[line width=1.0pt,color=blue,dashed] (0,-1)--(1,0)--(2,1)--(-2,1)--(-1,0)--(0,-1);
\fill[fill=cyan!30] (0,0)--(1,0)--(0,1)--(0,0);
\fill[fill=lime!30] (0,0)--(0,1)--(-1,0)--(0,0);
\fill[fill=green!60] (0,0)--(-1,0)--(0,-1)--(0,0);
\fill[fill=olive!30] (0,0)--(1,0)--(0,-1)--(0,0);
\draw[-triangle 45, line width=1.0pt,scale=1] (0,0) -- (2.5,0) node[below] {$\alpha$};
\draw[line width=1.0pt,-triangle 45] (0,0) -- (-2.5,0);
\draw[-triangle 45, line width=1.0pt,scale=1] (0,0) -- (0,1.5) node[left] {$\beta$};
\draw[line width=1.0pt,-triangle 45] (0,0) -- (0.0,-1.5);
\draw[line width=1.2pt,domain=-2:2,smooth,variable=\x,color=red,dashed] plot ({\x},{0.25*\x*\x)});
\draw[scale=1] (1,0) node[below] {\footnotesize $1 $};
\draw[scale=1] (0,1.2) node[left] {\footnotesize $1 $};
\draw[scale=1] (1,0.0) node[below] {\footnotesize $1 $};
\draw[scale=1] (-1,0) node[below] {\footnotesize $-1 $};
\draw[scale=1] (-0.1,-1) node[left] {\footnotesize $-1 $};
\draw[scale=1] (2,0) node[below] {\footnotesize $2 $};
\draw[scale=1] (-2,0) node[below] {\footnotesize $-2 $};
\draw[scale=1] (0.5,0.25) node[left] {\footnotesize (i)};
\draw[scale=1] (-0.5,0.25) node[right] {\footnotesize (ii)};
\draw[scale=1] (-0.5,-0.25) node[right] {\footnotesize (iv)};
\draw[scale=1] (0.5,-0.25) node[left] {\footnotesize (iii)};
\end{tikzpicture}
\end{center}
    \caption{This figure illustrates the conditions obtained on $\alpha$ and $\beta$ in contrast with Fig. \ref{Fig-TD}. The cases (i) to (iv) are based on Table \ref{Tab-TD1}. The colors reflect different types of monotonicity in Definition \ref{ordering}}\label{Fig-TD2}
\end{figure}

From the above cases and conditions \ref{theoremi} in Theorem \ref{Th-GlobalStability}, it becomes evident that our approach narrows down to tackling two linear inequalities in the region $x<y.$ In particular, we face the following system of linear inequalities:
\begin{equation}\label{(I)}
(I)\qquad \begin{cases}
x\leq& sx+ty\\
y\geq& tx+sy,
\end{cases}
\end{equation}
under different scenarios for the parameters $s$ and $t.$
For convenience in the sequel, we summarize the cases in which we obtain a feasible region in the following lemma. The proof is simple and omitted.

\begin{lemma}\label{Lem-FeasibleRegion}
Assume $x\leq y$ and consider the system of inequalities in (I). Each of the following holds true:
\begin{description}
\item{(i)} Consider  $s=0.$  There exists a feasible region iff $t>-1$
\item{(ii)} Consider  $t=0.$   There exists a feasible region iff $s<1$
\item{(iii)} Consider  $st< 0.$   A feasible region exists iff $|s|+|t|<1.$
\end{description}
\end{lemma}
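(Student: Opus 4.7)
My plan is to treat each of the three cases by direct linear analysis in the half-plane $\{x \leq y\}$, interpreting (I) as the strict system dictated by Theorem \ref{Th-GlobalStability} (the non-strict boundary being degenerate and handled separately).

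For part (i), with $s = 0$, the system collapses to $x \leq ty$ and $y \geq tx$. The candidate point $(-1,1)$ satisfies both inequalities exactly when $t \geq -1$, which gives sufficiency. Conversely, I would show that when $t \leq -1$ no proper region exists: at $t=-1$ the two inequalities sum to $x+y = 0$, collapsing the solution set to the line $y = -x$; and for $t < -1$, the estimates $|x|\geq |t|\,|y|$ and $|y| \geq |t|\,|x|$ iterate to $|x|\geq |t|^2 |x|$, forcing $x = y = 0$. For part (ii), with $t=0$, the system becomes $(1-s)x\leq 0$ and $(1-s)y \geq 0$, and a simple sign analysis combined with $x \leq y$ leaves only $s<1$ producing the proper region $\{x\leq 0 \leq y\}$ (at $s=1$ the strict versions are $0<0$, and at $s>1$ the orientation conflicts with $x\leq y$).

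For part (iii), under the sign pattern $s>0$, $t<0$ (which is the pattern actually arising in all four cases of \S\ref{motivational}), I would rewrite the system as $ax + by \leq 0$ and $bx + ay \geq 0$ with $a = 1-s$ and $b = -t > 0$. This bounds $y$ between two lines through the origin of slopes $-b/a$ and $-a/b$, and a direct comparison of slopes shows that their sector has a nonempty interior inside $\{x\leq y\}$ precisely when $a>0$ and $b<a$, equivalently $s<1$ and $|t| < 1-s$, i.e. $|s|+|t|<1$. The remaining subcases $s=1$ and $s>1$ collapse to the origin by direct substitution, consistent with $|s|+|t|\geq 1$.

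The one mild obstacle is the bookkeeping in part (iii): pinning down the sign convention under which the condition $|s|+|t|<1$ is sharp, and verifying that the boundary case $|s|+|t|=1$ degenerates to a single line (which is what makes the squeezing strategy fail, as noted in the remark following Corollary \ref{casen=2}). This is routine case analysis, which is presumably why the authors call the proof ``simple and omitted.''
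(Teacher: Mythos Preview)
The paper omits the proof entirely, so there is no approach to compare against; your direct linear analysis is the natural route, and your arguments for (i), (ii), and for (iii) under the sign pattern $s>0,\,t<0$ are correct.

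One point is worth making explicit: your restriction in (iii) to $s>0,\,t<0$ is not merely convenient but \emph{necessary}. The lemma as literally stated (with only $st<0$) fails in the sub-case $s<0,\,t>0$. For instance, with $s=-2,\,t=3$ the system $(I)$ becomes $3x\le 3y$ and $3y\ge 3x$, i.e.\ simply $x\le y$, so the strict feasible region is the entire open half-plane $\{x<y\}$ even though $|s|+|t|=5$. The paper's applications (Cases (i)--(iv) of \S\ref{motivational}, and later the proof of Proposition~\ref{Pr-MainTheorem}) always produce $s\ge 0$ and $t\le 0$, which is why this sub-case never surfaces and why the authors' remark that the three parts ``can be condensed in the inequality $|s|+|t|<1$'' is harmless in context. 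Your parenthetical justification is therefore on target, though it can be sharpened: the strict pattern $s>0,\,t<0$ arises in Cases (i) and (iv) of \S\ref{motivational}, while Cases (ii) and (iii) there land on the boundary parts $s=0$ or $t=0$ of the lemma---so ``all four cases'' is right only if read as $s\ge 0,\,t\le 0$.
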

It is worth mentioning that the three cases listed in Lemma \ref{Lem-FeasibleRegion} can be condensed in the inequality $|s|+|t|<1$; however, the separation between the cases is done to simplify the application based on the signs of the coefficients (or the monotonicities of $F$ in Eq. \eqref{Eq-SecondOrder1}). Note that Case (i) is associated with $F(\downarrow,\downarrow),$ Case (ii) is associated with $F(\uparrow,\uparrow)$, and Case (iii) is associated with $F(\uparrow,\downarrow)$ or $F(\downarrow,\uparrow)$ when $s$ and $t$ are switched. Also, $s=0$ means the coefficients in $F$ are non-positive, and $t$ combines $\alpha$ and $\beta$. Similarly, $s$ combines $\alpha$ and $\beta$ when $t=0$.

\subsection{Increasing delay}
As evidenced by the observations made in cases (i) and (ii) of Table 1, the obtained criterion is only sufficient, and the stability region in Fig. (\ref{Fig-TD}) is not completely covered. To enlarge the feasible domain for stability, i.e., add more sufficient conditions, we can appeal to a higher-degree equation as follows: from Eq. \eqref{Eq-SecondOrder1}, substitute $x_n=\alpha x_{n-1}-\beta x_{n-2}$ to obtain
\begin{equation}\label{Eq-SecondOrder2}
y_{n+1}=(\alpha^2-\beta) y_{n-1}-\alpha\beta y_{n-2}=F_2(y_n,y_{n-1},y_{n-2}).
\end{equation}
We changed $x_n$ into $y_n$ to stress that we are dealing with a new equation here. We increased the delay in Eq. \eqref{Eq-SecondOrder2} by one, and every solution of Eq. \eqref{Eq-SecondOrder1} can be a solution of Eq. \eqref{Eq-SecondOrder2} under suitable choices of the initial conditions. In particular, when $x_{-1}$ and $x_0$ are the initial conditions of Eq. \eqref{Eq-SecondOrder1}, we can consider $y_{-2}=(\alpha x_{-1}-x_0)/\beta$, $y_{-1}=x_{-1}$ and $y_0=x_0$ as the initial conditions in Eq. \eqref{Eq-SecondOrder2} to show that a solution of Eq. \eqref{Eq-SecondOrder1} can be a solution of \eqref{Eq-SecondOrder2}. Note that when we talk about a fixed point of $F$ or one of its expansions $F_j,$ we write the scalar value $\bar x;$ however, when we talk about an equilibrium solution of the $F_j$-difference equation, $\bar x$ denotes a vector with $j+1$ components.  In particular, $\bar x=0$ is a fixed point for both $F$ in Eq. \eqref{Eq-SecondOrder1} and $F_2$ in Eq. \eqref{Eq-SecondOrder2}, while $\bar x=(0,0)$ and $\bar x=(0,0,0)$ are equilibrium solutions of Eqs. \eqref{Eq-SecondOrder1} and \eqref{Eq-SecondOrder2}, respectively. Up to this end, it becomes clear that global convergence in Eq. \eqref{Eq-SecondOrder2} leads to a global convergence in Eq. \eqref{Eq-SecondOrder1}. Therefore, stability conditions on the new coefficients $\alpha^2-\beta$ and $\alpha\beta$ add alternative sufficient conditions to ensure that the zeros of $p$ in Eq. \eqref{Eq-PolynomialDegree2} are within the unit disk.
Observe that $F_2$ is constant in its first argument, while the monotonicity in the second and third arguments depends on the signs of $\alpha^2-\beta$ and $\alpha\beta,$ respectively. Therefore, we proceed to handle each case:
\begin{description}
\item{ \textbf{Case (i) revisited:}} Consider $\alpha>0,\beta>0$ and $\beta>\alpha^2.$ In this case, $F_2$ is constant in its first component. So, we can consider  $F_2(\uparrow,\downarrow,\downarrow)$ or $F_2(\downarrow,\downarrow,\downarrow).$ Both options lead to the same conclusion. So, we consider the latter option, and in this case, $P_\tau = (y,y,y)$. The system \eqref{theoremi} in $x,y$ takes the form
$x < F(y,y,y) = (\alpha^2-\beta-\alpha\beta)y$ and
$y> F(x,x,x) = (\alpha^2 - \beta -\alpha\beta)x$. By Lemma \ref{Lem-FeasibleRegion} (i), a solution exists if
$$\alpha^2-\beta-\alpha\beta+1>0.$$
On the other hand, if we consider $\alpha>0,\beta>0$ and $\beta<\alpha^2,$ then we have $P_\tau = (x,x,y), x<y$ and to have a feasible solution for System \eqref{theoremi}, we need the extra assumption $\alpha <1.$ Hence, from the two scenarios, we obtain
\begin{equation}\label{Eq-InCase(i)Revisted}
0\leq \alpha<1\quad \text{and}\quad 0<\beta<\frac{1+\alpha^2}{1+\alpha}.
\end{equation}

\item{ \textbf{Case (ii) revisited:}} Consider $\alpha<0,\beta>0$ and $\beta>\alpha^2.$ In this case, $F_2(\downarrow,\downarrow,\uparrow)$, $P_\tau=(y,y,x)$ and to obtain a feasible solution for the system \eqref{theoremi}, we need
$$\alpha^2+1>\beta(1-\alpha).$$
On the other hand, if we consider $\alpha<0,\beta>0$ and $\beta<\alpha^2,$ then $F_2(\uparrow,\uparrow,\uparrow)$, $P_\tau = (x,x,x)$ and we need the extra assumption $\alpha>-1$  coming from
\eqref{theoremi}. From the two scenarios again, we obtain
\begin{equation}\label{Eq-InCase(ii)Revisted}
-1<\alpha\leq 0\quad \text{and}\quad \beta<\frac{1+\alpha^2}{1-\alpha}.
\end{equation}
Based on these results, we update Fig. \ref{Fig-TD2} to obtain Fig. \ref{Fig-TD3}. Similarly, Table \ref{Tab-TD1} can be updated. Observe that when $\beta>0,$ the sufficient conditions $\alpha+\beta<1$ and $\beta-\alpha<1$ are updated to become
\begin{equation}\label{In-RevisedSuffCond1}
-1<\alpha<1\quad \text{and}\quad \beta<\frac{1+\alpha^2}{1+|\alpha|},
\end{equation}
which expands the stability region as shown in Fig. \ref{Fig-TD3}.
\definecolor{qqqqff}{rgb}{0.,0.,1.}
\definecolor{ffqqqq}{rgb}{1.,0.,0.}
\definecolor{cqcqcq}{rgb}{0.7529411764705882,0.7529411764705882,0.7529411764705882}
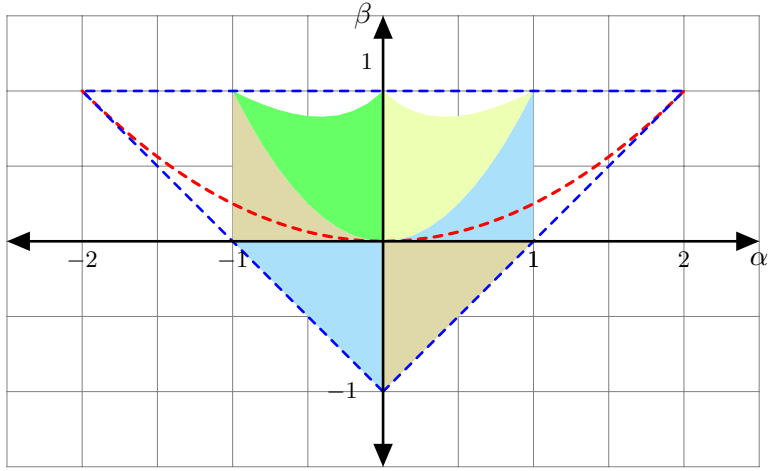
\begin{figure}[htpb]
\begin{center}
\begin{tikzpicture}[line cap=round,line join=round,>=triangle 45,x=1.0cm,y=1.0cm,scale=2.0]
\draw[help lines,step=.5] (-2.5,-1.5) grid (2.5,1.5);
\fill[fill=cyan!30,smooth] (0,0)--(0.1,0.01)--(0.2,0.04)--(0.3,0.09)--(0.4,0.16)--(0.5,0.25)--(0.6,0.36)--(0.7,0.49)--(0.8,0.64)--(0.9,0.81)--(1,1)--(1,0)--(0,0)--cycle;
\fill[fill=olive!30,smooth] (0,0)--(-0.1,0.01)--(-0.2,0.04)--(-0.3,0.09)--(-0.4,0.16)--(-0.5,0.25)--(-0.6,0.36)--(-0.7,0.49)--(-0.8,0.64)--(-0.9,0.81)--(-1,1)--(-1,0)--(0,0)--cycle;
\fill[fill=lime!30,smooth] (0,0)--(0.1,0.01)--(0.2,0.04)--(0.3,0.09)--(0.4,0.16)--(0.5,0.25)--(0.6,0.36)--(0.7,0.49)--(0.8,0.64)--(0.9,0.81)--(1,1)--(0.9,0.953)--(0.8,0.911)
--(0.7,0.876)--(0.6,0.85)--(0.5,0.833)--(0.4,0.829)--(0.3,0.838)--(0.2,0.867)--(0.1,0.918)--(0,1)--(0,0)--cycle;
\fill[fill=green!60,smooth] (0,0)--(-0.1,0.01)--(-0.2,0.04)--(-0.3,0.09)--(-0.4,0.16)--(-0.5,0.25)--(-0.6,0.36)--(-0.7,0.49)--(-0.8,0.64)--(-0.9,0.81)--(-1,1)--(-0.9,0.953)--(-0.8,0.911)
--(-0.7,0.876)--(-0.6,0.85)--(-0.5,0.833)--(-0.4,0.829)--(-0.3,0.838)--(-0.2,0.867)--(-0.1,0.918)--(0,1)--(0,0)--cycle;
\draw[line width=1.2pt,domain=-2:2,smooth,variable=\x,color=red,dashed] plot ({\x},{0.25*\x*\x)});
\fill[fill=cyan!30] (0,0)--(-1,0)--(0,-1)--(0,0);
\fill[fill=olive!30] (0,0)--(1,0)--(0,-1)--(0,0);
\draw[line width=1.0pt,color=blue,dashed] (0,-1)--(1,0)--(2,1)--(-2,1)--(-1,0)--(0,-1);
\draw[-triangle 45, line width=1.0pt,scale=1] (0,0) -- (2.5,0) node[below] {$\alpha$};
\draw[line width=1.0pt,-triangle 45] (0,0) -- (-2.5,0);
\draw[-triangle 45, line width=1.0pt,scale=1] (0,0) -- (0,1.5) node[left] {$\beta$};
\draw[line width=1.0pt,-triangle 45] (0,0) -- (0.0,-1.5);
\draw[scale=1] (1,0) node[below] {\footnotesize $1 $};
\draw[scale=1] (0,1.2) node[left] {\footnotesize $1 $};
\draw[scale=1] (1,0.0) node[below] {\footnotesize $1 $};
\draw[scale=1] (-1,0) node[below] {\footnotesize $-1 $};
\draw[scale=1] (-0.1,-1) node[left] {\footnotesize $-1 $};
\draw[scale=1] (2,0) node[below] {\footnotesize $2 $};
\draw[scale=1] (-2,0) node[below] {\footnotesize $-2 $};
\end{tikzpicture}
\end{center}
\caption{This figure illustrates the feasible region obtained by the conditions on $\alpha$ and $\beta$ after revisiting Case (i) and Case (ii) of table \ref{Tab-TD1}. The colors of the shaded regions match the colors in Fig. \ref{Fig-TD2} to reflect the type of monotonicity we obtain after one step of the expansion. The total stability region expands due to the inequalities in \eqref{In-RevisedSuffCond1}, as compared to Fig \ref{Fig-TD2}.}\label{Fig-TD3}
\end{figure}
\end{description}
The above process can be continued to expand the region of the coefficients $\alpha$ and $\beta$ in the upper two quadrants. Write $s_0=\alpha$ and $t_0=-\beta,$ then $s_1=s_0^2+t_0$ and $t_1=t_0s_0.$ In general, this leads to tackling
\begin{equation}\label{iteration}
x_{n+1}=F_{j+1}(x_n,x_{n-1},\ldots, x_{n-j-1})=s_jx_{n-j}+t_jx_{n-j-1},
\end{equation}
where $s_{j}=s_0s_{j-1}+t_{j-1}$ and $t_{j}=t_0s_{j-1}.$  Based on the sequences $\{s_j\},$ $\{t_j\}$ and Lemma \ref{Lem-FeasibleRegion}, we reach the following conclusion:

\begin{proposition}\label{Th-Order2}
Let $s_0=\alpha$ and $t_0=-\beta.$ Define the recursive sequences $s_{j}=s_0s_{j-1}+t_{j-1}$ and $t_{j}=t_0s_{j-1}.$ If $|s_j|+|t_j|<1$ for some $j=0,1,\ldots$, then the zero equilibrium in Eq. \eqref{Eq-SecondOrder1} is globally attracting.
\end{proposition}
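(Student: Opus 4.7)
The plan is to reduce the claim to a direct application of Theorem \ref{Th-GlobalStability} (via Lemma \ref{Lem-FeasibleRegion}) on the iterated system \eqref{iteration} at the index $j$ witnessing $|s_j|+|t_j|<1$. First I verify by induction on $j$ that every solution of \eqref{Eq-SecondOrder1} also satisfies $x_{n+1}=s_j x_{n-j}+t_j x_{n-j-1}$ with the stated recursions. The base case $j=0$ is the equation itself, and the inductive step substitutes $x_{n-j+1}=\alpha x_{n-j}-\beta x_{n-j-1}$ into $x_{n+1}=s_{j-1}x_{n-j+1}+t_{j-1}x_{n-j}$, giving $(s_0 s_{j-1}+t_{j-1})x_{n-j}+(t_0 s_{j-1})x_{n-j-1}$, which matches $s_j,t_j$ by definition.

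Next, I transfer global attractivity from \eqref{iteration} back to \eqref{Eq-SecondOrder1}. When $\beta\neq 0$, any initial pair $(x_{-1},x_0)$ for the original recurrence extends backward via $x_{-k-2}=(\alpha x_{-k-1}-x_{-k})/\beta$ to yield an initial vector for \eqref{iteration}, and by the induction above the two forward orbits coincide; convergence to zero of the iterated orbit then forces convergence of the original orbit. The degenerate case $\beta=0$ reduces \eqref{Eq-SecondOrder1} to $x_{n+1}=\alpha x_n$, under which the hypothesis $|s_j|+|t_j|<1$ becomes $|\alpha|^{j+1}<1$ and the conclusion is immediate. It thus suffices to show global attractivity of $0$ for \eqref{iteration} whenever $|s_j|+|t_j|<1$.

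The central step applies Theorem \ref{Th-GlobalStability} to $F_{j+1}$. Because $F_{j+1}$ is constant in its first $j$ arguments, the monotonicity pattern $\tau$ is unrestricted on those slots and is determined on the last two by the signs of $s_j,t_j$ alone. Choosing the free entries of $P_\tau$ and $P_\tau^t$ in the interval $[x,y]$, the inequalities \eqref{theoremi} reduce to a two-variable system of the form (I), with effective coefficients being $(s_j,t_j)$, $(s_j+t_j,0)$, $(0,s_j+t_j)$, or $(t_j,s_j)$ according to the four sign quadrants for $(s_j,t_j)$. In every case Lemma \ref{Lem-FeasibleRegion} produces a nonempty feasible region precisely when $|s_j|+|t_j|<1$, and the unboundedness of that region (as in Figure \ref{feasible}) lets me select, for any $X_0\in\bbr^{j+2}$, a point $P_\tau=P_\tau(x,y)$ with $x\leq\min_i X_0^{(i)}$ and $y\geq\max_i X_0^{(i)}$ satisfying $P_\tau\leq_\tau X_0\leq_\tau P_\tau^t$.

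The remaining hypotheses of Theorem \ref{Th-GlobalStability}, namely uniqueness of the fixed point $0$ and absence of pseudo-fixed points, follow from $s_j+t_j\neq 1$ (a strict consequence of the hypothesis) together with the fact that each of the boundary equalities of the form $|s_j|+|t_j|=1$ enumerated in the four cases of Section \ref{motivational} is excluded strictly by our assumption. The step I expect to be most delicate is this final bookkeeping: the four sign quadrants for $(s_j,t_j)$ must each be addressed separately to ensure that the appropriate reduction to system (I) is being applied, mirroring Table \ref{Tab-TD1}, and the pseudo-fixed-point system $(F_{j+1}(P_\tau),F_{j+1}(P_\tau^t))=(x,y)$ must be checked to have only the trivial solution $x=y=0$ in each quadrant. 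With these verifications in place, Theorem \ref{Th-GlobalStability} yields the global attractivity of $0$ for \eqref{iteration}, and the transfer step of paragraph two completes the proof.
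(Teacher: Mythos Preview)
Your proposal is correct and follows essentially the same route the paper takes: derive the iterated system \eqref{iteration} by repeated substitution, embed every solution of \eqref{Eq-SecondOrder1} into it (using backward extension when $\beta\neq 0$), and then invoke Theorem \ref{Th-GlobalStability} through Lemma \ref{Lem-FeasibleRegion} on the two-term linear map $F_{j+1}$. The paper leaves this proposition as an immediate consequence of the discussion preceding it (and of the later, more general Theorem \ref{Th-Algorithm} together with Proposition \ref{Pr-MainTheorem}); your write-up simply makes the four sign-quadrant cases, the pseudo-fixed-point check, and the degenerate case $\beta=0$ explicit, which the paper either folds into the surrounding narrative or omits.
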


Observe that Proposition \ref{Th-Order2} translates the sufficient conditions we obtained in our approach into a recurrence process. Let $S_j$ represent the feasible region of the inequality $|s_j|+|t_j|<1.$ For instance, when $j=2,$ we obtain
\begin{equation}\label{Eq-SecondOrder3}
x_{n+1}=F_3(x_n,x_{n-1},x_{n-2},x_{n-3})=s_2x_{n-2}+t_2x_{n-3},
\end{equation}
where $s_2=\alpha(\alpha^2-2\beta)$ and $ t_2=-\beta(\alpha^2-\beta).$
Applying Proposition \ref{Th-Order2} gives us the region $S_2$ described by the inequality
\begin{equation}\label{In-ThirdExpansion}
|\alpha(\alpha^2-2\beta)|+|\beta(\alpha^2-\beta)|<1.
\end{equation}
It can be seen graphically and computationally that $S_0\subset S_1,S_0\subset S_2,$ but $S_1\not\subset S_2.$ However, $S_1\cup S_2$ gives an overall improvement of $S_0,S_1$ and $S_2.$ The computational argument is as follows: $S_2$ is the region bounded between the curves of
\begin{align*}
\alpha=&1\\
\beta=& \alpha^2-|\alpha|+1\\
\beta^2=&\alpha(\alpha-2)\beta+(\alpha-1)(\alpha^2+\alpha+1)\\
\beta^2=&\alpha(\alpha+2)\beta+(\alpha+1)(\alpha^2-\alpha+1).
\end{align*}
The curve $\beta= \alpha^2-|\alpha|+1$ does not improve the curve obtained from the inequality boundary in \eqref{In-RevisedSuffCond1}; therefore, we can ignore it. The improved region becomes $S_1\cup S_2,$ which is illustrated in Fig. \ref{Fig-TD4}.
\definecolor{qqqqff}{rgb}{0.,0.,1.}
\definecolor{ffqqqq}{rgb}{1.,0.,0.}
\definecolor{cqcqcq}{rgb}{0.7529411764705882,0.7529411764705882,0.7529411764705882}
\begin{figure}[htpb]
\begin{center}
\begin{tikzpicture}[line cap=round,line join=round,>=triangle 45,x=1.0cm,y=1.0cm,scale=2.0]
\draw[help lines,step=.5] (-2.5,-1.5) grid (2.5,1.5);
\fill[fill=green!60,smooth]  (0,0)--(0.1,0.01)--(0.2,0.04)--(0.3,0.09)--(0.4,0.16)--(0.5,0.25)--(0.6,0.36)--(0.7,0.49)--(0.8,0.64)--(0.9,0.81)--(1,1)--
(0.9,0.95)--(0.8,0.91)--(0.7,0.88)--(0.6,0.85)--(0.5,0.83)--(0.4,0.83)--(0.3,0.84)--(0.2,0.87)--(0.1,0.92)--(0,1)--(0,0)--cycle;
\fill[fill=olive!30,smooth]  (0,0)--(-0.1,0.01)--(-0.2,0.04)--(-0.3,0.09)--(-0.4,0.16)--(-0.5,0.25)--(-0.6,0.36)--(-0.7,0.49)--(-0.8,0.64)--(-0.9,0.81)--(-1,1)--
(-0.9,0.95)--(-0.8,0.91)--(-0.7,0.88)--(-0.6,0.85)--(-0.5,0.83)--(-0.4,0.83)--(-0.3,0.84)--(-0.2,0.87)--(-0.1,0.92)--(0,1)--(0,0)--cycle;
\fill[fill=lime!30,smooth] (0,0)--(0.1,0.01)--(0.2,0.04)--(0.3,0.09)--(0.4,0.16)--(0.5,0.25)--(0.6,0.36)--(0.7,0.49)--(0.8,0.64)--(0.9,0.81)--(1,1)--
(1.414,1)--(1.4,0.98)--(1.3,0.845)--(1.2,0.72)--(1.1,0.605)--(1,0.5)--(0.9,0.405)--(0.8,0.32)--(0.7,0.245)--(0.6,.18)--(0.5,0.125)--(0.4,0.08)--(0.3,0.045)--(0.2,0.02)--(0.1,0.005)--(0,0)--cycle;
\fill[fill=cyan!30,smooth] (1.414,1)--(1.4,0.98)--(1.3,0.845)--(1.2,0.72)--(1.1,0.605)--(1,0.5)--(0.9,0.405)--(0.8,0.32)--(0.7,0.245)--(0.6,.18)--(0.5,0.125)--(0.4,0.08)--(0.3,0.045)--(0.2,0.02)--(0.1,0.005)--(0,0)--(1,0)--(1.414,1)--cycle;
\fill[fill=cyan!30,smooth] (0,0)--(-0.1,0.01)--(-0.2,0.04)--(-0.3,0.09)--(-0.4,0.16)--(-0.5,0.25)--(-0.6,0.36)--(-0.7,0.49)--(-0.8,0.64)--(-0.9,0.81)--(-1,1)--
(-1.414,1)--(-1.4,0.98)--(-1.3,0.845)--(-1.2,0.72)--(-1.1,0.605)--(-1,0.5)--(-0.9,0.405)--(-0.8,0.32)--(-0.7,0.245)--(-0.6,.18)--(-0.5,0.125)--(-0.4,0.08)--(-0.3,0.045)--(-0.2,0.02)--(-0.1,0.005)--(0,0)--cycle;
\fill[fill=lime!30,smooth] (-1.414,1)--(-1.4,0.98)--(-1.3,0.845)--(-1.2,0.72)--(-1.1,0.605)--(-1,0.5)--(-0.9,0.405)--(-0.8,0.32)--(-0.7,0.245)--(-0.6,.18)--(-0.5,0.125)--(-0.4,0.08)--(-0.3,0.045)--(-0.2,0.02)--(-0.1,0.005)--(0,0)--(-1,0)--(-1.414,1)--cycle;
\draw[line width=1.2pt,domain=-2:2,smooth,variable=\x,color=red,dashed] plot ({\x},{0.25*\x*\x)});
\fill[fill=green!60] (0,0)--(-1,0)--(0,-1)--(0,0);
\fill[fill=olive!30] (0,0)--(1,0)--(0,-1)--(0,0);
\draw[line width=1.0pt,color=blue,dashed] (0,-1)--(1,0)--(2,1)--(-2,1)--(-1,0)--(0,-1);
\draw[-triangle 45, line width=1.0pt,scale=1] (0,0) -- (2.5,0) node[below] {$\alpha$};
\draw[line width=1.0pt,-triangle 45] (0,0) -- (-2.5,0);
\draw[-triangle 45, line width=1.0pt,scale=1] (0,0) -- (0,1.5) node[left] {$\beta$};
\draw[line width=1.0pt,-triangle 45] (0,0) -- (0.0,-1.5);
\draw[scale=1] (1,0) node[below] {\footnotesize $1 $};
\draw[scale=1] (0,1.2) node[left] {\footnotesize $1 $};
\draw[scale=1] (1,0.0) node[below] {\footnotesize $1 $};
\draw[scale=1] (-1,0) node[below] {\footnotesize $-1 $};
\draw[scale=1] (-0.1,-1) node[left] {\footnotesize $-1 $};
\draw[scale=1] (2,0) node[below] {\footnotesize $2 $};
\draw[scale=1] (-2,0) node[below] {\footnotesize $-2 $};
\end{tikzpicture}
\end{center}
\caption{ The total shaded region in this figure illustrates the feasible region of Inequality \ref{In-ThirdExpansion}. The colored regions illustrate the type of monotonicity we obtain in the second expansion. Again here, the coloring matches the coloring in Fig. \ref{Fig-TD2} and Fig. \ref{Fig-TD3} to reflect the type of monotonicity in the expanded map. }\label{Fig-TD4}
\end{figure}
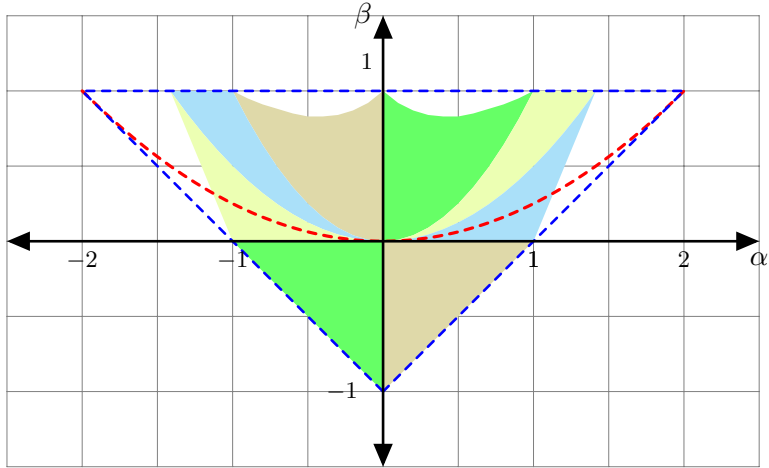

The recurrence process in Proposition \ref{Th-Order2} can be computationally implemented, as shown in Fig.  \ref{Fig-ForFun}. The figure shows the stability region in the $(\alpha,\beta)$-plane for up to $(s_3,t_3)$.

\begin{figure}[h!]
\centering
 \includegraphics[width=0.5\linewidth]{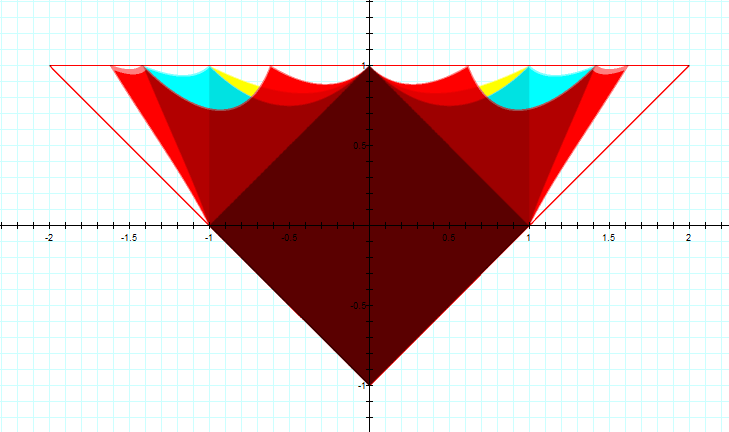}
\caption{This figure shows the stability regions in the $(\alpha,\beta)$-plane obtained after computationally applying  Proposition \ref{Th-Order2} for $(s_0,t_0)$ to $(s_3,t_3)$. Here, the colors differ from those in Fig. \ref{Fig-TD2} to Fig. \ref{Fig-TD4} because this figure's goal is to illustrate how the stability region extends overall, but not necessarily from one step to the next.}
\label{Fig-ForFun}
\end{figure}
The above inspiring example is consistently emphasized due to its ability to reveal significant critical ideas in developing our general algorithm. Observe that no potential for improvement was possible when the value of $\beta$ was negative in Eq. \eqref{Eq-SecondOrder1}. However, when $\beta$ was positive, we enhanced the stability region by employing the same technique on the iterations. What is the reason for this phenomenon? When $(\alpha,\beta)=(+,-)$ in Case (iii), the map $F$ in $x_{n+1}=F(x_n,x_{n-1})$ had infinitely many pseudo-fixed points when $\beta-\alpha=-1.$ The same condition continues to give infinitely many pseudo-fixed points for the iterated map, i.e., the map of $x_{n+1}=F(F(x_{n-1},x_{n-2}),x_{n-1})$. The same scenario happened in Case (iv) when $(\alpha,\beta)=(-,-).$ This phenomenon crippled the process, and this lines up with the fact that the obtained conditions are necessary and sufficient when $\beta\leq 0$.

\section{The Algorithm}\label{algorithm}

After illustrating how to implement our approach in the simple case of degree-two polynomials, we proceed to tackle the general case. 
Start with the $n^{th}$-degree monic-polynomial
as in Eq. \eqref{polynomial}
  \begin{equation}\label{Eq-nthDegreePolynomial}
  p(x)=x^n+a_{n-1}x^{n-1}+a_{n-2}x^{n-2}+\cdots+a_1x+a_0,
  \end{equation}
and associate to it the $n^{th}$-order linear difference equation
 \begin{equation}\label{Eq-nthDegreeDE2}
  x_{k+1}=F(x_k,x_{k-1},\ldots,x_{k-n+1})=-a_{n-1}x_k-a_{n-2}x_{k-1}-\cdots-a_0x_{k-n+1}.
  \end{equation}
The monotonicity of $F$ depends on the signs of the coefficients, and in general, there are $2^n$ cases to consider.
The following key proposition is mostly classical and can be traced back to Smithies \cite{smithies}. It provides Schur stability under inexpensive constraints when sufficient conditions are sought. The proof we provide is novel compared to the established complex analysis method that employs Rouch\'e's theorem. 

\begin{proposition}\label{Pr-MainTheorem} ($\ell_1$-condition).
 Consider the polynomial $p(x)$ in Eq. \eqref{Eq-nthDegreePolynomial} with real coefficients $a_j$.
Then the zeros of $p(x)$ are located within $\mathbb{D}$, that is $p(x)$ is Schur-stable, if its $\ell_1$-norm  is less than two, i.e.
$$\|p\|_1=1+\sum_{j=0}^{n-1}|a_j|< 2\ \Longrightarrow\ p(x)\ \hbox{is Shur stable}$$
This condition is necessary and sufficient if:\\
(i) All the coefficients are negative.\\
(ii) For each $0\leq k\leq n-1$, $(-1)^{k+n}a_k < 0$.\\
In particular, the $\ell_1$-condition is necessary and sufficient in the case $n=3$ if $a_1< 0$ and $a_2a_0> 0$.
\end{proposition}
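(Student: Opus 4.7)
My plan is to apply Theorem \ref{Th-GlobalStability} to the linear system \eqref{Eq-nthDegreeDE2}, exploiting the fact that for a linear map the monotonicity structure is determined entirely by the signs of the coefficients, so the vertex $P_\tau$ and the inequalities \eqref{theoremi} take a very clean form. Partition the $n$ arguments into $S^- = \{j : a_{n-j} \leq 0\}$ (where $F$ is non-decreasing) and $S^+ = \{j : a_{n-j} > 0\}$, and set
\[
s = \sum_{j \in S^-} |a_{n-j}|, \qquad t = \sum_{j \in S^+} |a_{n-j}|,
\]
so that $s + t = \sum_{i=0}^{n-1} |a_i|$. A short direct expansion would give $F(P_\tau) = sx - ty$ and $F(P_\tau^t) = sy - tx$, after which \eqref{theoremi} becomes a system of the form (I) in parameters $(s, -t)$. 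Lemma \ref{Lem-FeasibleRegion}, in the condensed form $|s|+|{-t}|<1$ noted in the subsequent remark, then yields a feasible region precisely when $s + t < 1$, that is $\|p\|_1 < 2$.

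To finish applying Theorem \ref{Th-GlobalStability}, I would verify the initial-condition hypothesis by taking $x = -M$, $y = M$ with $M > \max_i |x_{-i}|$; the ordering $P_\tau \leq_\tau X_0 \leq_\tau P_\tau^t$ holds by inspection, while $F(P_\tau) = -(s+t)M > -M$ and $F(P_\tau^t) = (s+t)M < M$ follow from $s + t < 1$. For the absence of pseudo-fixed points I would solve $sx - ty = x$, $sy - tx = y$ with $x \ne y$: multiplying the two equations forces $(s-1)^2 = t^2$ (the degenerate $xy = 0$ case collapses to $s=1$, $t=0$), and since $s, t \geq 0$ both $s + t = 1$ and $s - t = 1$ contradict $\|p\|_1 < 2$. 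Theorem \ref{Th-GlobalStability} then delivers global attraction to $0$, equivalent to Schur stability of $p$.

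For the necessity claims I would switch to two elementary real-analytic arguments. Under (i), all $a_k < 0$ gives $p(1) = 1 - \sum_k |a_k|$; since $p(z) \to +\infty$ as $z \to +\infty$, any $p(1) \leq 0$ would produce a real root in $[1, \infty)$ by the intermediate value theorem, contradicting Schur stability, so $\|p\|_1 < 2$. Under (ii) I would reduce to (i) via $q(z) = (-1)^n p(-z)$: its $z^k$ coefficient is $(-1)^{n+k} a_k < 0$ by hypothesis, its roots are the negatives of those of $p$, and $\|q\|_1 = \|p\|_1$. The $n=3$ addendum follows because (i) and (ii) together cover exactly the sign patterns $(a_0,a_1,a_2)=(-,-,-)$ and $(+,-,+)$, which is precisely the locus $a_1 < 0$, $a_0 a_2 > 0$.

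The main obstacle I anticipate is tracking the strictness of the inequality throughout. The degree-two discussion in Section \ref{motivational} already flagged that boundary configurations such as $\alpha + \beta = 1$ produce a continuum of pseudo-fixed points that obstruct Theorem \ref{Th-GlobalStability}; the same mechanism would block the general argument, so the strict hypothesis $\|p\|_1 < 2$ must be used carefully both in exhibiting a feasible box for an arbitrary initial condition and in excluding pseudo-fixed-point loci.
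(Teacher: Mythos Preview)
Your sufficiency argument is essentially the paper's: partition the coefficients by sign, evaluate $F$ at $P_\tau$ and $P_\tau^t$, land in system~(I), and invoke Lemma~\ref{Lem-FeasibleRegion} together with Theorem~\ref{Th-GlobalStability}. You are more explicit than the paper on two points the paper glosses over: you give a concrete box $x=-M$, $y=M$ for the initial-condition hypothesis, and you actually verify the absence of pseudo-fixed points (the paper simply invokes Theorem~\ref{Th-GlobalStability} without separately checking that hypothesis in the proof of Proposition~\ref{Pr-MainTheorem}). These additions are correct and make the argument more self-contained.

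Your treatment of the necessity claims is genuinely different from the paper's. The paper appeals to an external result (Smithies' Theorem~II) giving the two necessary conditions $\sum_k\alpha_k<1$ and $(-1)^n\sum_k(-1)^k\alpha_k<1$, and then observes that under hypotheses (i) or (ii) one of these sums coincides with $\sum_k|\alpha_k|$. You instead give a direct intermediate-value argument for~(i), exploiting $p(1)=1-\sum|a_k|$ and $p(z)\to+\infty$, and reduce~(ii) to~(i) via the substitution $q(z)=(-1)^np(-z)$. Your route is more elementary and fully self-contained, at the cost of being tailored to these specific sign patterns; the Smithies citation buys the paper a slightly more uniform statement but at the price of an external dependency. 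Both arrive at the same conclusion, and your derivation of the $n=3$ addendum by enumerating the two sign patterns $(-,-,-)$ and $(+,-,+)$ is cleaner than the paper's one-line remark.
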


\begin{proof}
   Define the sign function as follows:
  $$\sign(a_j)= \begin{cases}
  1& \text{if}\quad  a_j>0\\
  0& \text{if}\quad  a_j=0\\
  -1& \text{if} \quad a_j<0.
  \end{cases}
  $$
  and let $\Gamma_j$ denote the set of indices $i$ in which $\sign(a_i)=(-1)^{j+1} $ for $j=1,2.$
  The $\leq_\tau$ partial order of Definition \ref{ordering} is defined based on $\sign(a_j).$ If $\sign(a_j)\geq 0,$  the map $F$ in Eq. \eqref{Eq-nthDegreeDE2} is decreasing in its $j^{th}$ component and $\tau(j)=-1$, otherwise, it is increasing and $\tau(j)=1$. Now,  let $x\leq y,$ and consider $P_\tau$ as in \eqref{pointer}.  
  By Theorem \ref{Th-GlobalStability}, we have to check the inequalities 
$x\leq F(P_\tau)$, $F(P_\tau^t)\leq y$ and the initial condition $P_\tau\leq_\tau X_0\leq P_\tau^t$, for any $X_0\in\bbr^{n}$. The first condition follows from the fact that upon replacing $P_\tau$ into that inequality, we end up with System (I) in \eqref{(I)}.  
  If $\sign(a_{n-1})=1,$ the inequalities become seeking a feasible solution for a system of the form
  $$x\leq sx+ty\quad \text{and}\quad y\geq sy+tx,$$
  where
  $\ds s=\sum_{i\in\Gamma_1}a_i$ and $\ds t=\sum_{i\in \Gamma_2}a_i. $
  On the other hand, If $\sign(a_{n-1})=-1,$ we need a feasible solution for
  $$tx+sy\leq y\quad \text{and}\quad x\leq ty+sx.$$
  Without loss of generality, consider the first case when $\sign(a_{n-1})=1$.
  Based on Lemma \ref{Lem-FeasibleRegion}, a solution exists if 
  \begin{equation} \label{step1}|\sum_{i\in\Gamma_1}a_i| + |\sum_{i\in \Gamma_2}a_i| < 1.
  \end{equation}
  However, since $\Gamma_j$ classifies the coefficients based on their signs, we obtain  
  $\displaystyle \left|\sum_{i\in\Gamma_j}a_i\right|= \sum_{i\in\Gamma_j} |a_i|.$ Therefore, it follows that if 
  $\displaystyle\sum_{i\in\Gamma_1} |a_i| + \sum_{i\in\Gamma_2} |a_i| = \sum_{i=0}^{n-1}|a_i| <1$, and \eqref{step1} is verified. On the other hand, the initial condition is satisfied for the same reasons as in Case (i) of Section \ref{motivational}, that is, for the unboundedness of the set of solutions of $P_\tau$ in rays codified by the $\tau$-ordering (see Fig. \ref{feasible}).
  We can now invoke Theorem \ref{Th-GlobalStability} to obtain global convergence to the zero equilibrium. This conclusion implies that the zeros of the polynomial $p(x)$ are located inside $\mathbb{D}$.

  The necessity is a consequence of (Theorem II page 269 of \cite{smithies}). Smithies shows that if the polynomial is $\ds p(x) = x^n - \sum_{k=0}^{n-1}\alpha_kx^k$, then necessary conditions that all the roots are less than
unity in absolute value are  (i) $\ds\sum_{k=0}^{n-1}\alpha_k<1$, and (ii)
  $\ds (-1)^n\sum_{k=0}^{n-1} (-1)^k\alpha_k<1$.
  Each condition is necessary. When each of this sum is equal to $\ds\sum_{k=0}^{n-1} |\alpha_i|$, this also gives sufficiency. This happens under the conditions stated in the Proposition. Conditions (i) and (ii) can be condensed into one condition when $n=3.$ 
  \end{proof}

 As observed in our motivational example and its consequent Proposition \ref{Th-Order2}, the outcome of Proposition \ref{Pr-MainTheorem} can be enhanced by passing to iterate constructions, i.e., the stability region may be extended by applying the $\ell_1$-condition on the iterates. To formalize this notion, we let $a_{n-j}$ be the first nonzero coefficient in Eq. \eqref{Eq-nthDegreeDE2}. This means $p$ becomes 
\begin{equation}\label{Eq-(an-j)}
    p(x)=x^n+a_{n-j}x^{n-j}+a_{n-j-1}x^{n-j-1}+\cdots+a_1x+a_0\ \ ,\ \ a_{n-j}\neq 0
\end{equation}
   and its associated linear system becomes  
   \begin{equation}\label{Eq-(Xn-j)}x_{k+1} = F(x_k,x_{k-1},\ldots, x_{k-n+1})=
   -a_{n-j}x_{k-j+1}-a_{n-j-1}x_{k-j}-\cdots-a_0x_{k-n+1}.
   \end{equation}
\begin{theorem}\label{Th-Algorithm} (Algorithm).
   Consider the real coefficient polynomial $p$ in Eq. \eqref{Eq-(an-j)} and its associated linear system in Eq.  \eqref{Eq-(Xn-j)}. 
    Define the iterated system by substituting $F$ for the first variable of nonzero coefficient (i.e., $x_{k-j+1}$)  
   \begin{eqnarray}
   y_{k+1} &=& 
   -a_{n-j}F(y_{k-j},\ldots, y_{k-n-j+1})-a_{n-j-1}y_{k-j}-\cdots-a_0y_{k-n+1}\nonumber\\
   &=:&b_{j}y_{k-j} + \cdots +
   b_{j+n-1}y_{k-j-n+1}.\label{Eq-higherorder2}
   \end{eqnarray}
   Then
   $$\sum_{i=j}^{j+n-1} |b_i| < 1\ \Longrightarrow\  p(x)\ \hbox{is Schur stable}.$$
   (The same result holds if we substitute in any other variable).
\end{theorem}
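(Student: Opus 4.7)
The plan is to apply Proposition \ref{Pr-MainTheorem} (the $\ell_{1}$-condition) to the iterated equation \eqref{Eq-higherorder2} and transfer the resulting Schur stability back to the original polynomial $p$.

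The key observation is that any solution $(x_{k})$ of the original equation \eqref{Eq-(Xn-j)} is automatically a solution of the iterated equation \eqref{Eq-higherorder2}. Indeed, since $(x_{k})$ satisfies the recurrence at every index, one can apply it at time $k-j$ to get $x_{k-j+1}=F(x_{k-j},\ldots,x_{k-j-n+1})$; substituting this expression for $x_{k-j+1}$ into \eqref{Eq-(Xn-j)} produces exactly \eqref{Eq-higherorder2}. Equivalently, writing $q(z)=z^{n+j}-b_{j}z^{n-1}-b_{j+1}z^{n-2}-\cdots-b_{j+n-1}$ for the characteristic polynomial of the order-$(n+j)$ equation \eqref{Eq-higherorder2}, a direct expansion yields the factorization
\[
q(z)=(z^{j}-a_{n-j})\,p(z),
\]
so $p\mid q$ and the roots of $p$ are contained in those of $q$. (A sanity check: for $p(z)=z^{2}-\alpha z+\beta$ one computes $q(z)=z^{3}+(\beta-\alpha^{2})z+\alpha\beta=(z+\alpha)p(z)$, in agreement with the iteration in \S\ref{motivational}.)

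Given this, the conclusion is immediate. The $\ell_{1}$-norm of $q$ is $\|q\|_{1}=1+\sum_{i=j}^{j+n-1}|b_{i}|$, so the hypothesis $\sum_{i=j}^{j+n-1}|b_{i}|<1$ is precisely $\|q\|_{1}<2$. Applying Proposition \ref{Pr-MainTheorem} to $q$ places all its roots in $\mathbb{D}$, and since the roots of $p$ form a subset of those of $q$, the polynomial $p$ is Schur stable. The parenthetical statement that one may substitute $F$ into any other variable $x_{k-m+1}$ (provided $a_{n-m}\neq 0$) follows by exactly the same argument, producing an analogous factorization $(z^{m}-a_{n-m})\,p(z)$ for the new characteristic polynomial. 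The only real obstacle is the index bookkeeping needed to verify the factorization $q(z)=(z^{j}-a_{n-j})p(z)$; everything else reduces to a one-line application of Proposition \ref{Pr-MainTheorem}.
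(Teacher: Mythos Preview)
Your proof is correct and follows essentially the same route as the paper: both argue that solutions of \eqref{Eq-(Xn-j)} are solutions of \eqref{Eq-higherorder2}, so Schur stability (equivalently, global attractivity of $0$) for the iterated system forces it for the original, and then invoke Proposition~\ref{Pr-MainTheorem}. Your explicit factorization $q(z)=(z^{j}-a_{n-j})\,p(z)$ is a clean algebraic sharpening of the solution-inclusion step that the paper leaves implicit; it is correct (and the analogous $(z^{m}-a_{n-m})p(z)$ for substitution in another variable checks out the same way), but it does not constitute a different method.
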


The explicit form of each $b_i$ is given in terms of the coefficients of $p$ after the substitution. The algorithm
now starts with the polynomial $p$ under scrutiny and computes $\|p\|_1.$ If $\|p\|_1-1 = \sum |a_i| < 1,$ the roots are within the
open unit disk, and $p$ is Schur stable. If not, we consider the iterated system in Eq. \eqref{Eq-higherorder2} obtained by substitution. 
If  $\sum |b_i| < 1,$ then $p$ is Schur stable, and the process stops. If not, then we iterate again by substituting
$y_{k-j} = F(y_{k-j-1}, ..., y_{k-j-n-1})$ in Eq. \eqref{Eq-higherorder2} and check if the new sum $\sum |c_i|$ of the positive part of the 
coefficients of this new linear system is less than one or not. If the sum at any iterate is less than one, then we obtain a sufficient condition for the Schur stability of $p.$  Section \ref{Sec-Applications} illustrates the process on a number of examples.

\begin{proof}
As already mentioned before, every solution of Eq. \eqref{Eq-(Xn-j)} can be a solution of Eq. \eqref{Eq-higherorder2} under suitable choices of the initial conditions. Therefore, convergence to zero in Eq. \eqref{Eq-higherorder2} forces convergence to zero in Eq. \eqref{Eq-nthDegreeDE2}. Now,
the condition $\displaystyle\sum_{i=j}^{j+n-1} |b_i| < 1$ ensures that the iterated system converges to zero by Proposition \ref{Pr-MainTheorem}, which guarantees the Schur stability of $p.$ 
\end{proof}

Even though our method for addressing Schur stability relies on sufficient conditions, it provides several advantages over the Jury's stability algorithm. In the Jury's algorithm, the stability decision cannot be determined until the conclusion of the procedure. In contrast, our approach has the advantage of making decisions regarding stability from the outset in some cases. 

\begin{remark}\rm There are three necessary conditions in the Jury's stability test, namely $|p(0)|<1,$ $p(1)>0$, and $(-1)^np(-1)>0.$ When those conditions guarantee the $\ell_1$-condition, we obtain the necessary and sufficient conditions in Proposition \ref{Pr-MainTheorem}. This was clear in the third and fourth quadrants of Fig. \ref{Fig-TD2} (see also Table \ref{Tab-TD1}. For $n^{th}$ degree polynomials in general, the $\ell_1$-condition of the first step is necessary and sufficient for two orthants of the $n$-dimentional parameter space. On the other hand, those conditions are implicitly involved in the hypothesis of Proposition \ref{Pr-MainTheorem}. The condition $|p(0)|=|a_0|<1$ is obvious. So, we clarify the other two conditions: $$p(1)=1+\sum_{j=0}^{n-1}a_j\geq 1-\sum_{j=0}^{n-1}|a_j|\geq 1-1=0$$
 and
 $$(-1)^np(-1)=1-a_{n-1}+a_{n-2}+\ldots+(-1)^na_0>1-\sum_{j=0}^{n-1}|a_j|\geq 1-1=0.$$
 Therefore, the Jury's necessary conditions are guaranteed by $\|p\|_1<2.$ The point here is that checking these conditions before implementing Proposition \ref{Pr-MainTheorem} and Theorem \ref{Th-Algorithm} can be labor-saving when parameters are involved in the coefficients.
 \end{remark}

\section{Running the algorithm: applications}\label{Sec-Applications}

In this section, we provide three illustrative examples demonstrating the utility and effectiveness of Theorem \ref{Th-Algorithm}. We begin with a simple numerical example compared with the Jury's algorithm. 

 \begin{example}\label{Ex-1}\rm
 \textbf{(A numerical comparison with the Jury's algorithm):}
Consider the polynomial 
\begin{equation}\label{Eq-Example-p(x)}
p(x)=x^5+\frac{1}{2}x^4-\frac{1}{2}x-\frac{1}{2}.
\end{equation}
\end{example}
This polynomial has a real root between zero and one; however, we want to check whether all roots are clustered in the open unit disk.  
The necessary conditions are satisfied since $|p(0)|=\frac{1}{2}<1$ and $p(1)=(-1)^5p(-1)=\frac{1}{2}>0.$ Now, we run the Jury's algorithm and summarize the computations in Table \ref{Tab-Example6.1A}. Then, we run our algorithm and summarize the computations in Table \ref{Tab-Example6.1B}.

\begin{table}[ht]
\caption{This table shows the computations of the Jury's algorithm when applied on the polynomial $p(x)$ in Eq. \eqref{Eq-Example-p(x)}. }\label{Tab-Example6.1A}
 \centering
\begin{tabular}{c| c c c c c c}
\hline\hline
 &&& &&& \\   [-1.0ex]
\textbf{ Step} & $x^0$ & $x^1$ & $x^2$ & $x^3$ & $x^4$ & $x^5$ \\[1.0ex]
  \hline\hline
  &&&&&&\\  [-1.0ex]
  \textbf{1} & $-\frac{1}{2}$ & $-\frac{1}{2}$ & $0$ & $0$ & $\frac{1}{2}$ & $1$ \\  [1.0ex]
  \textbf{2} & $1$ & $\frac{1}{2}$ & $0$ & $0$ & $-\frac{1}{2}$ & $-\frac{1}{2}$ \\  [1.0ex]
  \hline
  &&&&&&\\  [-1.0ex]
  \textbf{3}&$-\frac{3}{4}$ & $-\frac{3}{4}$ & $0$ & $0$ & $\frac{3}{4}$ &   \\ [1.0ex]
  \textbf{4} & $\frac{3}{4}$ & $0$ & $0$ & $-\frac{3}{4}$ & $-\frac{3}{4}$ & \\  [1.0ex]
  \hline
  &&&&&&\\  [-1.0ex]
  \textbf{5} & $\frac{35}{16}$ & $\frac{3}{16}$ & $0$ & $\frac{1}{16}$ &  &  \\ [1.0ex]
  \textbf{ 6} & $\frac{1}{16}$ & $0$ & $\frac{3}{16}$  & $\frac{35}{16}$ &  &  \\  [1.0ex]
  \hline
  &&&&&&\\  [-1.0ex]
  \textbf{ 7} & $\frac{153}{32}$ & $\frac{105}{256}$ & $\frac{-3}{256}$  &  &  &  \\  [1.0ex]
  \hline\hline
\end{tabular}
\end{table}

\begin{table}[ht]
\caption{This table shows the computations of our algorithm when applied on the linear system associated with the polynomial $p(x)$ in Eq. \eqref{Eq-Example-p(x)}. The polynomials in the table are the polynomials associated with linear systems obtained from algorithm of Theorem \ref{Th-Algorithm}.}\label{Tab-Example6.1B}
 \centering
\begin{tabular}{c| c c c c}
\hline\hline
 &&&&\\ [-1.0ex]
\textbf{ Step} &&$p(x)$ && $\|p\|_1$ \\ [1.0ex]
  \hline\hline
  &&&&\\
  \textbf{1} & &$x^5+\frac{1}{2}x^4-\frac{1}{2}x-\frac{1}{2}$ &&  $1+\frac{3}{2}$ \\
 &&&&\\
  \textbf{2}& &$x^6-\frac{1}{4}x^4-\frac{1}{2}x^2-\frac{1}{4}x+\frac{1}{4}$ & & $1+\frac{5}{4}$ \\
 &&&&\\
   \textbf{3}& &$x^7+\frac{1}{8}x^4-\frac{1}{2}x^3-\frac{1}{4}x^2+\frac{1}{8}x-\frac{1}{8}$ &&  $1+\frac{9}{8}$ \\
 &&&&\\
  \textbf{4}& &$x^8-\frac{9}{16}x^4-\frac{1}{4}x^3+\frac{1}{8}x^2-\frac{1}{16}x+\frac{1}{16}$ &&  $1+\frac{17}{16}$ \\
  &&&&\\
  \textbf{5}& &$x^9+\frac{1}{32}x^4+\frac{1}{8}x^3-\frac{1}{16}x^2-\frac{7}{32}x-\frac{9}{32}$ &&  ${\bf 1+\frac{23}{32}<2}$ \\
  &&&&\\
  \hline\hline
\end{tabular}
\end{table}

\begin{example}\label{Ex-2}\rm
\textbf{A Cournot oligopoly model:}  Theocharis proposed the following model \cite{Th1960}, and examined the impact of increasing the number of competitors on stability in a Cournot oligopoly characterized by a linear demand function and constant marginal costs.
\begin{equation}\label{Eq-Theocharis}
x_{n+1}^i=\max\left\{0,(1-\lambda)x^i_n+\lambda\left(\frac{a-c_i}{2b}-\frac{1}{2}\sum_{j=1}^N(x_n^j-x_n^i)\right)\right\},
\end{equation}
where $x_n^i$ is the output of the $ith$ competitor at discrete time unit $n,$ $N$ is the total number of competitors, $c_i$ represents the constant marginal cost of competitor $i,$ $a$ and $b$ are parameters related to the price, and $\lambda \in[0,1]$ is the adaptive parameter.  Under the assumption that there is a delay of knowledge each firm has on its competitors, C\'{a}novas \cite{Ca2023}    considered Eq. \eqref{Eq-Theocharis} with delays
\begin{equation}\label{Eq-Theocharis}
x_{n+1}^i=\max\left\{0,(1-\lambda)x^i_n+\lambda\left(\frac{a-c_i}{2b}-\frac{1}{2}\sum_{j=1}^N(x_{n-k}^j-x_{n-k}^i)\right)\right\}.
\end{equation}
When the number of competitors $N=2,$ it was found in \cite{Ca2023} that some of the eigenvalues of the Jacobian matrix are the zeros of the polynomial
$$
p_1(x)=x^{k+1}-(1-\lambda)x^k-\frac{\lambda}{2},
$$
while the other eigenvalues are the zeros of the polynomial
$$
p_2(x)=x^{k+1}-(1-\lambda)x^k+\frac{\lambda}{2}.
$$
The local stability can be settled in two steps based on Proposition \ref{Pr-MainTheorem}. Indeed, for the polynomial $p_1,$ we have 
$$\|p_1\|_1=1+1-\lambda+\frac{\lambda}{2}<2,$$
and for the polynomial $p_2,$ we have
$$\|p_2\|_1=2-\lambda+\frac{\lambda}{2}<2.$$
Note that Jury's stability algorithm is tedious here, and its computations are formidable when $k$ is large.

When the number of competitors $N=3,$ the polynomial $p_1$ stays the same, while $p_2$ changes into
$$
p_3(x)=x^{k+1}-(1-\lambda)x^k+\lambda.
$$
We already know that the eigenvalues of $p_1$ are within  $\mathbb{D}$, and we need to test the polynomial $p_3.$ Since $0<\lambda<1,$ we have one coefficient positive and the other is negative. However, we obtain
$$\|p_3\|_1=1+1-\lambda+\lambda=2.$$
Therefore, Proposition \ref{Pr-MainTheorem} fails here, and this is because the map $F$ in
$$x_{n+1}=F(x_n,x_{n-1},\ldots,x_{n-k})=(1-\lambda)x_n-\lambda x_{n-k}$$
is having infinitely many pseudo-fixed points. So, we appeal to Theorem \ref{Th-Algorithm}. The first iterate gives us
\begin{align*}
x_{n+1}=&F_2(x_{n-1},\ldots,x_{n-k-1})\\
=&(1-\lambda)^2x_{n-1}-\lambda x_{n-k}-\lambda (1-\lambda)x_{n-k-1}
\end{align*}
and
$$\sum|a_j|=(1-\lambda)^2+\lambda+\lambda(1-\lambda)=1.$$
We keep repeating the process to obtain 
\begin{align*}
x_{n+1}=&F_{k+1}(x_{n-k},\ldots,x_{n-2k})\\
=&\left((1-\lambda)^{k+1}-\lambda\right)x_{n-k}-\lambda\sum_{j=1}^k (1-\lambda)^jx_{n-k-j},
\end{align*}
and consequently,
\begin{align*}
\sum|a_j|=&\left|(1-\lambda)^{k+1}-\lambda\right|+\lambda\sum_{j=1}^k(1-\lambda)^j\\
<&(1-\lambda)^{k+1}+\lambda+\lambda\sum_{j=1}^k(1-\lambda)^j=1.
\end{align*}
Hence, all roots of $p_3$ are clustered within the open unit disk. 
\end{example}

We close this section by another illustrative example from mathematical biology.

\begin{example}\rm
Consider the $N$-species competition model of Ricker type 
\begin{equation}\label{Eq-CompetitionModel}
x^j_{n+1}=x^j_n\exp\left[r_j-\sum_{i=1}^N a_{ji}x^i_n\right],\quad j=1,2,\ldots,N,
\end{equation}
where $r_j,\;j=1,\ldots,N$ are positive and represent the carrying capacities of species $x_n^j,j=1,\ldots,N,$  respectively. The other parameters are all nonnegative real numbers. When $N=2,$ the stability analysis of this model was done in  \cite{El-Lu2011,Lu-El-Ol2011}. We assume in this example that all carrying capacities are equal, i.e., $r_j=r$ for all $j.$ Also, a substitution can neutralize $a_{jj}$ to become $1$ for all $j.$ In other words, $a_{jj}=1.$ A coexistence equilibrium $(\bar x^1,\bar x^2,\ldots,\bar x^N)$ must be a solution of the linear system $AX=B,$ where 
$$
A=\left[
    \begin{array}{ccccc}
      1 & a_{12} & a_{13}&\ldots&a_{1N} \\
      a_{21} &1& a_{23} &\ldots&a_{2N}  \\
      \vdots & \vdots & \vdots&\ddots&\vdots \\
       a_{N1} & a_{N2} &\ldots&a_{N,N-1}&1  \\
    \end{array}
  \right],\quad  B=r\left[
                                    \begin{array}{c}
                                      1 \\
                                      1 \\
                                      \vdots\\
                                      1 \\
                                    \end{array}
                                  \right] \quad \text{and}\quad
                                  X=\left[
                                    \begin{array}{c}
                                      \bar x^1 \\
                                      \bar x^2 \\
                                      \vdots\\
                                      \bar x^N \\
                                    \end{array}
                                  \right].
$$
To guarantee the uniqueness of the coexistence, we consider $A$ to be invertible, i.e., $\det(A)\neq 0.$ It is logical to foster species so they can have the same coexistence level. In this case, we can force  $\bar x^j=t$ for all $j=1,\ldots,N.$ This means $\frac{r}{t}$ must be an eigenvalue of $A$ and $(1,\ldots,1)$ must be its associated eigenvector. In this case, we must have 
$$\sum_{i=1}^Na_{ji}=\sum_{i=1}^Na_{ki}=:a\quad \text{and}\quad t=\frac{r}{a}.$$

Next, the Jacobian matrix $J$ at the coexistence equilibrium $(t,\ldots,t)$ is given by 
$$J=\left[
    \begin{array}{ccccc}
      1-t & ta_{12} & ta_{13}&\ldots&ta_{1N} \\
      ta_{21} &1-t& ta_{23} &\ldots&ta_{2N}  \\
      \vdots & \vdots & \vdots&\ddots&\vdots \\
       ta_{N1} & ta_{N2} &\ldots&ta_{N,N-1}&1-t  \\
    \end{array}
  \right]. $$
  
  \begin{proposition}\label{Pr-Competition}
  Suppose the coexistence equilibrium satisfies $\bar x^j=t$ for all $j=1,\ldots,N.$ Then $\lambda_1=1-r$ is an eigenvalue of the Jacobian matrix $J.$ 
  \end{proposition}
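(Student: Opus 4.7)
The plan is to exhibit an explicit eigenvector associated with $1-r$. The hypothesis that every row of $A$ has the same sum $a$ suggests the natural candidate $\mathbf{v}=(1,1,\ldots,1)^T$, since by assumption $A\mathbf{v}=a\mathbf{v}$.

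First I would rewrite $J$ in a compact form. Differentiating $x^j_{n+1} = x^j_n\exp[r-\sum_i a_{ji}x^i_n]$ and evaluating at the coexistence point $(t,\ldots,t)$, the exponent becomes $r - at = 0$ (using $at=r$), so the exponential factor is $1$. This gives $\partial x^j_{n+1}/\partial x^j_n = 1 - a_{jj}t = 1-t$ and $\partial x^j_{n+1}/\partial x^i_n = -ta_{ji}$ for $i\neq j$, so the Jacobian can be compactly written as $J = I - tA$.

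Next I would apply $J$ to $\mathbf{v}$. Using $A\mathbf{v}=a\mathbf{v}$,
\begin{equation*}
J\mathbf{v} = \mathbf{v} - t(A\mathbf{v}) = (1 - ta)\mathbf{v}.
\end{equation*}
Substituting $t = r/a$ gives $ta=r$, hence $J\mathbf{v} = (1-r)\mathbf{v}$, which proves that $1-r$ is an eigenvalue of $J$ with explicit eigenvector $\mathbf{v}$.

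There is no real obstacle here; the result is a direct consequence of the symmetric row-sum hypothesis on $A$, which makes $\mathbf{1}$ a simultaneous eigenvector of $A$ and of $I-tA$, and hence of $J$. The only point worth flagging for the reader is the identity $at=r$, which encodes precisely the condition that the equilibrium is equal across species and pins down the value of the corresponding eigenvalue.
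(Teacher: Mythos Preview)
Your proof is correct and follows essentially the same route as the paper: both write $J=I-tA$, observe that $\mathbf{v}=(1,\ldots,1)^T$ is an eigenvector of $A$ with eigenvalue $a=r/t$, and conclude $J\mathbf{v}=(1-ta)\mathbf{v}=(1-r)\mathbf{v}$. Your version is slightly more explicit in justifying the identity $J=I-tA$ via the partial derivatives, whereas the paper simply asserts it.
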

  \begin{proof}
  Observe that $J=I-tA.$ Since $\bar x^j=t$ for all $j=1,\ldots,N,$  we already know that $\frac{r}{t}$ is an eigenvalue of $A$ and $V=(1,\ldots,1)$ is the eigenvector. Now, we obtain 
  $$JV=(I-tA)V=V-tA=V-t\frac{r}{t}V=(1-r)V.$$
  Therefore, $\lambda_1=1-r$ is an eigenvalue and $V=(1,\ldots,1)$ is an eigenvector belongs to $\lambda_1.$  
  \end{proof}
Since we aim to give showcases, we fix $N=3.$ The general case can be tackled similarly. Based on Lemma  \ref{Pr-Competition}, we write the characteristic polynomial of $J$ as 
$$p(x)=(x-r+1)(x^2-(r+2-3t)x+1+(a-3)t+b t^2, $$
where $a=c_1+c_2+1$ and $b=(a_3-c_1-1)b_1+c_1a_2-b_3+1.$ Recall that $t=\frac{r}{a},$ and therefore, the characteristic polynomial coefficients are written in terms of $r,a$ and $b.$ Based on these facts and our algorithm, we give the following result:

\begin{proposition}\label{Pr-MathBiologyExample}
Consider Model \ref{Eq-CompetitionModel} with $N=3$, and let $b=(a_3-c_1-1)b_1+c_1a_2-b_3+1.$ Assumptions that $r_1=r_2=r_3=r$ and $c_1+c_2=b_1+b_3=a_2+a_3=:a-1.$ The coexistence equilibrium $(t,t,t),$ where $t=\frac{r}{a}$ is locally asymptotically stable if $0<r<2$ and one of the following sufficient conditions is satisfied:\\
(i)  $|r + 2 - 3t| + |1 + (a - 3)t + bt^2|<1$\\
(ii)  $|(r + 2 - 3t)^2 - 1 - (a - 3)t - bt^2 + |r + 2 - 3t|\,|1 + (a - 3)t + bt^2|<1.$
\end{proposition}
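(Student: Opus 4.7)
The plan is to exploit the factorization of the characteristic polynomial that the authors have already recorded, and then apply our two-step machinery (Proposition \ref{Pr-MainTheorem} and Theorem \ref{Th-Algorithm}) to the quadratic factor.

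First I would read off from the displayed polynomial
\[
p(x)=(x-r+1)\bigl(x^2-(r+2-3t)x+1+(a-3)t+bt^2\bigr)
\]
that the spectrum of the Jacobian at $(t,t,t)$ consists of $\lambda_1=r-1$ (this is the eigenvalue produced by Proposition \ref{Pr-Competition}) together with the two roots of the quadratic factor
\[
q(x)=x^2-(r+2-3t)x+\bigl(1+(a-3)t+bt^2\bigr).
\]
Since $|\lambda_1|<1$ is equivalent to $0<r<2$, the hypothesis $0<r<2$ takes care of $\lambda_1$, and the local asymptotic stability question reduces to showing that both roots of $q$ lie in $\mathbb D$, i.e., that $q$ is Schur-stable.

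Next, I would apply the $\ell_1$-condition of Proposition \ref{Pr-MainTheorem} directly to $q$. With $\alpha':=r+2-3t$ and $\beta':=1+(a-3)t+bt^2$ the coefficients of $q$ (up to sign) satisfy $\|q\|_1-1=|\alpha'|+|\beta'|$, and the condition $\|q\|_1<2$ becomes exactly
\[
|r+2-3t|+|1+(a-3)t+bt^2|<1,
\]
which is (i). This immediately forces $q$ to be Schur-stable, so under hypothesis (i) we are done.

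For (ii), I would invoke Theorem \ref{Th-Algorithm} once. The linear system associated with $q$ is $x_{k+1}=\alpha' x_k-\beta' x_{k-1}$; substituting the recursion into its first argument produces the iterate
\[
x_{k+1}=(\alpha'^2-\beta')x_{k-1}-\alpha'\beta'\,x_{k-2},
\]
whose $\ell_1$-condition is $|\alpha'^2-\beta'|+|\alpha'\beta'|<1$, and this is precisely the stated inequality (ii). By Theorem \ref{Th-Algorithm}, this too is sufficient for Schur-stability of $q$, and hence, combined with $0<r<2$, for local asymptotic stability of $(t,t,t)$. There is no genuine obstacle here: the only thing to verify carefully is the bookkeeping of signs when extracting $\alpha'$ and $\beta'$ from $q$ so that the iterate coefficients match the absolute values appearing in (ii).
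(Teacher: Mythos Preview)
Your proposal is correct and follows essentially the same route as the paper: use Proposition~\ref{Pr-Competition} and the factorization of $p(x)$ to reduce to the quadratic factor, then obtain (i) from its $\ell_1$-norm and (ii) from the $\ell_1$-norm of the first iterate via Theorem~\ref{Th-Algorithm}. Your write-up is in fact more detailed than the paper's own proof, which merely asserts these two facts in one sentence each.
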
 
\begin{proof}
As verified in Proposition \ref{Pr-Competition}, $0<r<2$ makes one of the eigenvalues between $-1$ and $1.$ Now, condition (i) is from the $\ell_1$-norm of $x^2-(r+2-3t)x+1+(a-3)t+b t^2.$ Condition (ii) is from the $\ell_1$-norm of the first iterate. See Fig. \ref{Fig-MathBiology}  for illustrations in the $(b,a)$ plane as $r$ varies between zero and two. 
\end{proof}
We emphasize that we have set $N=3$ to avoid formidable expressions; nonetheless, the same algorithm can be applied in higher dimensions when no necessary and sufficient criteria are established. 
\end{example}
\definecolor{MyMaroon}{rgb}{128,0,0}
\definecolor{MyOrange}{rgb}{255,87,51}
\definecolor{ffvvqq}{rgb}{1,0.3333333333333333,0}
\definecolor{qqqqff}{rgb}{0.,0.,1.}
\definecolor{ffqqqq}{rgb}{1.,0.,0.}
\definecolor{cqcqcq}{rgb}{0.7529,0.7529,0.7529}
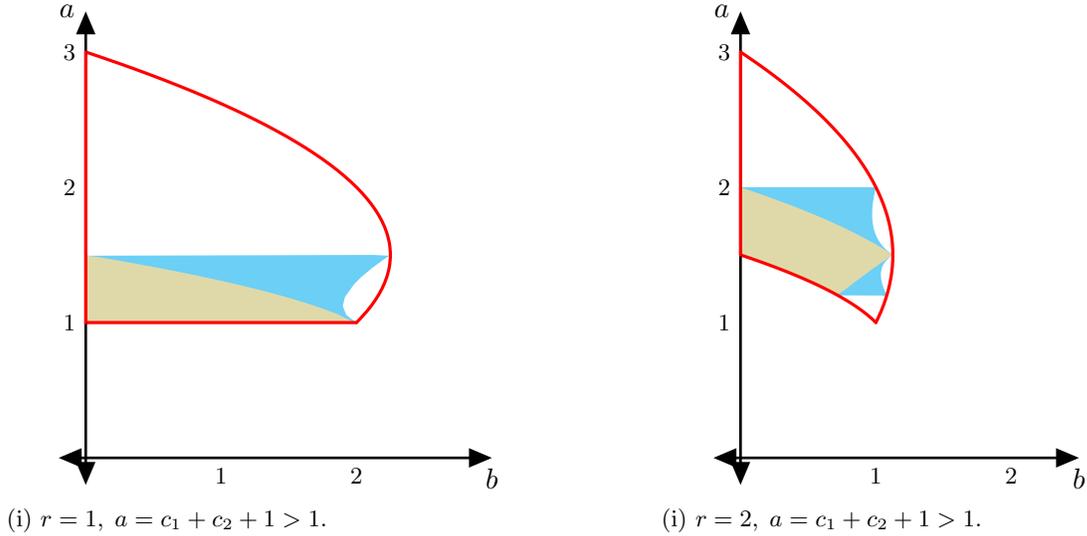
\begin{figure}[h!]
\centering
\begin{minipage}[t]{0.5\textwidth}
\begin{center}
\begin{tikzpicture}[line cap=round,line join=round,>=triangle 45,x=1.0cm,y=1.0cm,scale=1.8]
\fill[line width=1pt,color=ffqqqq,fill=cyan!50,fill opacity=1.0] (-5.392656115992917E-10,1.0050742321394215) -- (1.989769348797636,1.0053086419753108) -- (1.924671278408555,1.0553769543927087) -- (1.8984733217738352,1.12218806893875) -- (1.9137834864574463,1.1874999996223883) -- (1.9812655571603577,1.2830992736716789) -- (2.0348121113525472,1.336139034949472) -- (2.1181363877783115,1.40574221531788) -- (2.190307795965052,1.4590107197263422) -- (2.2413300962007123,1.494197530864199) -- (2.1221398634744966,1.50) -- (0,1.494197530864199) -- cycle;
\fill[line width=1pt,color=ffqqqq,fill=olive!30,fill opacity=1.0] (-1.7992123591753777E-8,1.0000000062829968) -- (1.9896430727023358,0.9974074074074083) -- (1.857168450635136,1.063381372961457) -- (1.7579765292005458,1.100721927475422) -- (1.6706072431270258,1.1305892656265788) -- (1.5992761547332894,1.1533372829039026) -- (1.4947317355454852,1.1845308573730806) -- (1.4027614041230687,1.210227822221491) -- (1.3395569174252362,1.2270857079735755) -- (1.2814363975858034,1.2420781459255426) -- (1.2238313983270581,1.2564999023780428) -- (1.1313673977138894,1.2788271494354626) -- (1.0000166236853427,1.3090132734471795) -- (0.8945700551903832,1.3321146681347722) -- (0.7990379826046115,1.352279424110142) -- (0.7049700725580993,1.3714961602503353) -- (0.6416615050720996,1.3841014279220512) -- (0.5985444378352116,1.3925448513988268) -- (0.5341697832032937,1.4049485154813985) -- (0.45404771795962917,1.4200657212775132) -- (0.39804580922489874,1.43043261603005) -- (0.3259729330225808,1.4435465134041838) -- (0.25268840181767377,1.45663137114814) -- (0.19768849837293206,1.4662945476818114) -- (0.148,1.47495) -- (0.0974,1.484) -- (0.0445,1.49) -- (0,1.5) -- cycle;
\draw[line width=1.2pt,domain=1:3.0,smooth,variable=\y,color=red] plot ({\y*(3-\y)},{\y});
\draw[line width=1.2pt,domain=0:2.0,smooth,variable=\x,color=red] plot ({\x},1);
\draw[-triangle 45, line width=1.0pt,scale=1] (0,0) -- (3.0,0) node[below] {$b$};
\draw[line width=1.0pt,-triangle 45] (0,0) -- (-0.2,0);
\draw[-triangle 45, line width=1.0pt,scale=1] (0,0) -- (0,3.3) node[left] {$a$};
\draw[line width=1.2pt,domain=1:3.0,smooth,variable=\y,color=red] plot (0,{\y});
\draw[line width=1.0pt,-triangle 45] (0,0) -- (0.0,-0.2);
\draw[scale=1] (1,0) node[below] {\footnotesize $1 $};
\draw[scale=1] (2,0) node[below] {\footnotesize $2 $};
\draw[scale=1] (0,1.0) node[left] {\footnotesize $1 $};
\draw[scale=1] (0,2.0) node[left] {\footnotesize $2 $};
\draw[scale=1] (0,3.0) node[left] {\footnotesize $3 $};
\draw[scale=1] (0.6,-0.3) node[below] {\footnotesize (i) $r=1,\; a=c_1+c_2+1>1.$};
\end{tikzpicture}
\end{center}
\end{minipage}%
\begin{minipage}[t]{0.5\textwidth}
\begin{center}
\begin{tikzpicture}[line cap=round,line join=round,>=triangle 45,x=1.0cm,y=1.0cm,scale=1.8]
\fill[line width=1pt,color=ffqqqq,fill=cyan!50,fill opacity=1.0] (1,2) -- (0.991780286475253,1.964887159896192) -- (0.985325867383787,1.9328563769879956) -- (0.9788214365828924,1.892753902090679) -- (0.9752672626610832,1.8629686660883893) -- (0.973039837003531,1.8348518468838557) -- (0.9720031392711901,1.7981354741948774) -- (0.9727411723059103,1.771909541050093) -- (0.9748828596445653,1.7457494572611385) -- (0.982096064798283,1.7022108598233525) -- (0.9888377225924476,1.6766695233440991) -- (1.0021003528446562,1.6408739815064917) -- (1.0150670758654923,1.614894688236794) -- (1.0334877621305818,1.586055695221658) -- (1.0508436839308046,1.5641966225444714) -- (1.0732650066054186,1.5408752436726694) -- (1.096803631000733,1.520504736534877) -- (1.1249950318432305,1.5031521918626725) -- (1.1024817472442419,1.4835084043793678) -- (1.0729187348995568,1.4549879109573767) -- (1.0541138338243785,1.4280183573010057) -- (1.043130247284786,1.4028796366074463) -- (1.0360497800470045,1.3644059736224117) -- (1.0382436221446696,1.3241855088687742) -- (1.046479068828602,1.287304725869354) -- (1.0584797910940322,1.2514490079672291) -- (1.0703142049402734,1.222077940342937) -- (1.077523183956556,1.1918545277196366) -- (1.0495473226030514,1.2000000005171354) -- (0.7178180185384202,1.2012105829477981) -- (0.6874556545680595,1.2177308769228574) -- (0.6428321089391331,1.2410029995643312) -- (0.6006410169107606,1.2620346583213613) -- (0.5602575912440695,1.281386115838861) -- (0.5275344599630665,1.29656451648804) -- (0.4968576351467629,1.310420541612402) -- (0.4536109978009819,1.329391493233066) -- (0.42392129450275073,1.3420636399696257) -- (0.3627801742201786,1.3673410023067762) -- (0.31829472796279046,1.3851004924666934) -- (0.2754027125647639,1.4017657885066483) -- (0.2281164141168438,1.4196579682171708) -- (0.1895558232763668,1.4339021058461576) -- (0.14722923496204332,1.4492033928693409) -- (0.10902759482176916,1.4627315070997051) -- (0.06440847034852941,1.4782140933883845) -- (0.0,1.502561391512716) -- (0.0,2.0) -- cycle;
\fill[line width=1pt,color=ffqqqq,fill=olive!30,fill opacity=1.0] (0.0,1.5) -- (0.05959704294886459,1.4798640137214383) -- (0.11787740207948971,1.4596205316648154) -- (0.17507232274814388,1.439176202887134) -- (0.23374852555883996,1.41755204832326) -- (0.2973103434038368,1.3933077244197225) -- (0.35316581610135755,1.3712222564825902) -- (0.418649079874024,1.3442856720996965) -- (0.4909423854331221,1.3130531123112978) -- (0.5468082681765615,1.2876763579624078) -- (0.6124955931711903,1.2562135946558577) -- (0.7135922582438369,1.2035458861880257) -- (0.7761088543717403,1.245880296641194) -- (0.8436022716905748,1.2989243789156522) -- (0.9933206826001817,1.4094826605541813) -- (1.0616876357033835,1.4571805883693028) -- (1.1249995493452725,1.5009493731906598) -- (1.0534396953336738,1.5456252100314962) -- (0.9902926186285707,1.5829278885480447) -- (0.9223909722418997,1.6205422511435041) -- (0.8250161487292605,1.6708123679132112) -- (0.7519177880963244,1.7062021505507234) -- (0.6618048922740876,1.7475270817585589) -- (0.5980989280301889,1.7754143277866787) -- (0.5258490542795644,1.8058746566866177) -- (0.4475642293094208,1.8376298982258752) -- (0.3907140453790507,1.859955797618285) -- (0.31846135993635016,1.8875203813962114) -- (0.24820098651428504,1.9135276000058812) -- (0.1995358288382622,1.9311155214303222) -- (0.1366130541984036,1.9533753887946328) -- (0.0902734224987697,1.9694419620401562) -- (0.0485223542657793,1.9836929240793175) -- (0,2) -- cycle;
\draw[line width=1.2pt,domain=1:1.5,smooth,variable=\y,color=red] plot ({\y*(3-2*\y)},{\y});
\draw[line width=1.2pt,domain=1:3.0,smooth,variable=\y,color=red] plot ({0.5*\y*(3-\y)},{\y});
\draw[-triangle 45, line width=1.0pt,scale=1] (0,0) -- (2.5,0) node[below] {$b$};
\draw[line width=1.0pt,-triangle 45] (0,0) -- (-0.2,0);
\draw[-triangle 45, line width=1.0pt,scale=1] (0,0) -- (0,3.3) node[left] {$a$};
\draw[line width=1.2pt,domain=1.5:3.0,smooth,variable=\y,color=red] plot (0,{\y});
\draw[line width=1.0pt,-triangle 45] (0,0) -- (0.0,-0.2);
\draw[scale=1] (1,0) node[below] {\footnotesize $1 $};
\draw[scale=1] (2,0) node[below] {\footnotesize $2 $};
\draw[scale=1] (0,1.0) node[left] {\footnotesize $1 $};
\draw[scale=1] (0,2.0) node[left] {\footnotesize $2 $};
\draw[scale=1] (0,3.0) node[left] {\footnotesize $3 $};
\draw[scale=1] (0.6,-0.3) node[below] {\footnotesize (i) $r=2,\; a=c_1+c_2+1>1.$};
\end{tikzpicture}
\end{center}
\end{minipage}%
\caption{The red boundary in both figures represent the stability region obtained by the necessary and sufficient conditions. The shaded regions represents the regions obtained by the first two steps of our algorithm as obtained in Proposition \ref{Pr-MathBiologyExample} }\label{Fig-MathBiology}
\end{figure}

\section{Stratifying the coefficient locus}\label{stratification}

This section complements \S\ref{algebraictheory}.
We explain that our algorithm produces an increasing filtration of the coefficients locus $\mathcal C_n(\bbd)$ by semialgebraic sets of increasing degrees (i.e. akin of a Taylor series). 
The starting point are Figures \ref{Fig-TD3}-\ref{Fig-ForFun} which nicely tell the story in degree two. The stability (or coefficient) locus $\mathcal C_n(\bbd)$ is the entire triangular region in Fig. \ref{Fig-TD}, and it is being covered incrementally by
semialgebraic sets whose defining polynomials are increasing in degree, starting with linear, then quadratic, etc, as for a ``Taylor series''. This can be formalized as follows.

Let $X\subset\bbr^N$ be a semialgebraic set. An \textit{increasing filtration} in $X$ is a sequence of nested semialgebraic subsets
\begin{equation}\label{filtration}
F_0\subset F_1\subset\cdots\subset F_k\subset\cdots \subset X.
\end{equation}
The filtration may be infinite, so we write $F_\infty$  its direct limit. The $F_i$'s form a filtration ``of'' $X$ if $F_\infty =X$, in which case $X = \bigsqcup X_i$ (set-theoretic disjoint union) where $X_i :=F_i\setminus F_{i-1}$. 

Recall that a semialgebraic set $X$ is given by polynomial equations and inequalities \eqref{semialgebraicset}. If the polynomial degrees are all $\leq k$, then we say that $X$ is defined by polynomials of degree $\leq k$. Note that one must be careful in associating a degree to $X$ since different polynomials, with different degrees, can define the same set, like $\{\alpha\in\bbr, \alpha^2<1\}=\{\alpha\in\bbr\;:\; -1 < \alpha < 1\}$.  In all cases, we can make sense of the following definition.

\begin{definition}
A semialgebraic ``Taylor'' filtration is a filtration as in the nested sequence \eqref{filtration}, where each $F_k$ is semialgebraic, defined by polynomials having degrees at most $k$.
\end{definition}

We now explain how our iterative process produces an increasing Taylor filtration for $\mathcal C_n(B)$. Start with
$$F_0=S_0 = \{(a_0,\ldots, a_{n-1})\in\bbr^n \;:\; \|p\|_1 < 2\}.$$
Then $F_0\subset\mathcal C_n(\bbd)$ is the first stratum in the filtration. Next set
$$S_1 =  \left\{ (a_0,\ldots, a_{n-1})\ : \
\sum |b_i|<1\right\},$$
where $b_i$'s are the coefficients of the first iterate system, as in \eqref{Eq-higherorder2}. We can now define $F_1 = S_0\cup S_1$. A key observation is that the inequalities defining $F_1$ are quadratic on the coefficients of the polynomial $a_0,\ldots, a_{n-1}$. Passing to the next iterate, we end up with a set of conditions which are now given by polynomials of degree three on the coefficients. This process continues, and we construct this way a sequence of subspaces $S_i$, and a filtration term
$$F_k = S_0\cup\cdots \cup S_k.$$
This filtration is semialgebraic by construction, and we have the series of inclusions $F_0\subseteq\cdots\subseteq F_k\subseteq\cdots$, whose linear stratum $F_0$ is given by all those polynomials with $\ell_1$-norm less than $2$. 
The convergence of our filtration to $\mathcal C_n(\bbd)$ is still open for more exploration, that is, if $F_\infty =\mathcal C_n(\bbd)$, or if it converges to something smaller. Finally, observe that in the case $n=2,$ we have 
\begin{eqnarray*}
S_0 &=& \{(\alpha,\beta)\ |\ |\alpha | + |\beta | < 1\}\\
S_1 &=& \{(\alpha,\beta)\ |\ 
|\alpha^2-\beta|+|\alpha\beta)|<1\}\\
S_2 & =& \{(\alpha,\beta)\ |\ 
|\alpha(\alpha^2-2\beta)|+|\beta(\alpha^2-\beta)|<1\}.
\end{eqnarray*}

\section{Conclusion}
An essential issue in the realm of dynamical systems is the analysis of the local stability of equilibrium solutions.  This involves analyzing the Jacobian matrix's spectrum or its characteristic polynomial roots. Local stability in a discrete-time dynamical system is achieved when the roots of the characteristic polynomial are clustered inside the open unit disk of the complex plane. From an algebraic perspective, the set of coefficients of real monic polynomials whose roots belong to the unit disk is a semialgebraic set; however, the set of equations and inequalities defining the semialgebraic set is neither unique nor easy to find.  The Jury's stability algorithm is widely acknowledged as a sophisticated algorithm for tackling the Schur stability problem. However, the cumbersomeness of the Jury's approach becomes apparent when dealing with polynomials of high degree or polynomials with coefficients that depend on parameters.   We use the authors' recent global stability results \cite{Al-Ca-Ka2023} for functions with mixed monotonicity to find sufficient conditions for the roots to cluster inside the unit disk. Proposition \ref{Pr-MainTheorem} and Theorem \ref{Th-Algorithm} can be applied successively to obtain an algorithm. The first step of the algorithm leads to the sufficient condition that a monic-polynomial is Schur-stable if its $\ell_1$-norm is less than two. Although this condition has been known since 1942, it is surprising that no algorithm has been devised that make use of this condition iteratively. Furthermore, by examining the signs of the coefficients in the polynomial, one can decipher the conditions obtained by our technique and devise an effective method. Finally, the developed technique has strengths, as illustrated in examples \ref{Ex-1} and \ref{Ex-2}, but limitations when many algorithm steps are needed. This issue is receptive to further investigation in our forthcoming research.

\vskip 1cm

\noindent{\textbf{Acknowledgement:}} The authors express their gratitude to the reviewers for their insightful remarks, which significantly enhanced the quality of our paper. Additionally, the first author expresses gratitude to Laura Gardini for the thought-provoking conversation held during the ECIT 2024 conference.  The first author is supported by sabbatical leave from the American University of Sharjah and by Maria Zambrano grant for attracting international talent from the Polytechnic University of Cartagena.
\bibliographystyle{unsrt}

\bibliography{AlSharawi-bibliography}

\end{document}